\newcommand{\e}{\varepsilon}
\DeclareMathOperator{\iindex}{index}
\DeclareMathOperator{\sign}{sign}
\DeclareMathOperator{\meas}{meas}
\DeclareMathOperator{\supp}{supp}
\DeclareMathOperator{\rank}{rank}
\DeclareMathOperator{\Ran}{Ran}
\DeclareMathOperator{\Dom}{Dom}
\DeclareMathOperator{\dom}{dom}
\DeclareMathOperator{\Ker}{Ker}
\DeclareMathOperator{\Tr}{Tr}
\renewcommand\Re{\hbox{{\rm Re}}\,}
\newcommand{\abs}[1]{\lvert#1\rvert}
\newcommand{\aabs}[1]{\left\lvert#1\right\rvert}
\newcommand{\norm}[1]{\lVert#1\rVert}
\newcommand{\R}{{\mathbb R}}
\newcommand{\C}{{\mathbb C}}
\newcommand{\HH}{{\mathcal H}}
\newcommand{\K}{{\mathcal K}}
\newcommand{\frakS}{{\mathfrak S}}
\numberwithin{equation}{section}
\theoremstyle{plain}
\newtheorem{theorem}{\bf Theorem}[section]
\newtheorem{lemma}[theorem]{\bf Lemma}
\newtheorem{proposition}[theorem]{\bf Proposition}
\newtheorem{corollary}[theorem]{\bf Corollary}
\theoremstyle{definition}
\theoremstyle{remark}
\newtheorem*{remark*}{\bf Remark}
\renewcommand{\qed}{\vrule height7pt width5pt depth0pt}
\newcommand{\T}{{\mathbb T}}
\let\geq\geqslant
\let\leq\leqslant
\DeclareMathOperator{\ind}{index}
\newcommand{\wt}{\widetilde}
\DeclareMathOperator{\sflow}{sf}
\begin{document}
\title[Eigenvalues in the gaps]{Operator theoretic methods for the eigenvalue counting function in spectral gaps}
\sloppy

\author{Alexander Pushnitski}
\address{Department of Mathematics\\
King's College London\\ 
Strand, London WC2R  2LS, U.K.}
\curraddr{}
\email{alexander.pushnitski@kcl.ac.uk}
\thanks{}
\date{25 September 2008}

\subjclass[2000]{Primary 35P20; Secondary 47B25, 47F05}

\keywords{spectral flow, spectral gaps, Birman-Schwinger principle, spectral shift function, Landau levels}

\begin{abstract}
Using the notion of spectral flow, we suggest a simple approach to 
various asymptotic problems involving eigenvalues in the gaps of 
the essential spectrum of self-adjoint operators. 
Our approach uses some elements of the spectral shift function 
theory. Using this approach, we provide generalisations and 
streamlined proofs of two results in this area already existing in 
the literature. We also give a new proof of the generalised 
Birman-Schwinger principle. 
\end{abstract}

\maketitle

\section{Introduction}\label{sec.a}

\subsection{The spectral flow}
Since the pioneering work \cite{ADH}, problems involving counting functions of eigenvalues
in the gaps of the essential spectrum of self-adjoint operators attracted a  considerable amount 
of attention in the mathematical physics literature.
Let us recall the set-up of the problem. 
Let $M$ and $A$ be self-adjoint operators in a Hilbert space 
such that the spectrum of $M$ has a gap and $A$ is $M$-compact. 
Then, for any $t\in\R$, the essential spectra of $M$ and $M+tA$ coincide 
and the eigenvalues of  $M+tA$ in the spectral gaps of $M$ are analytic in $t$. 
If $A\geq0$ or $A\leq0$ in the quadratic form sense, then these eigenvalues are monotone 
in $t$; in general, they may not be monotone. 

Let us fix  a coupling constant $t>0$  and  a spectral parameter $\lambda$ in a spectral gap of $M$ and consider 
one of the variants of the eigenvalue counting function, known as 
the spectral flow of the family $M+\tau A$, $\tau \in[0,t]$, through $\lambda$. 
This is defined as follows. As $\tau$ increases monotonically from $0$ to $t$, 
some eigenvalues of $M+\tau A$ may cross $\lambda$. By analyticity in $\tau$, there
will only be  a finite number of such crossings. Some eigenvalues will cross 
$\lambda$ from left to right, others from right to left. 
The spectral flow is defined as  
\begin{multline}
\sflow(\lambda; M+t A,M)
\\
=
\langle 
\text{the number of eigenvalues of $M+\tau A$, $0\leq \tau \leq t$, which cross $\lambda$ rightwards}
\rangle
\\
-
\langle 
\text{the number of eigenvalues of $M+\tau A$, $0\leq \tau\leq t$, which cross $\lambda$ leftwards}
\rangle.
\label{a0}
\end{multline}
Some eigenvalues may ``turn around'' at $\lambda$ (i.e. for some $\tau_0\in(0,t)$, the function $\lambda_n(\tau)$ 
may have a local minimum or local maximum at $\tau=\tau_0$); these eigenvalues do not contribute to \eqref{a0}. 
The eigenvalues are counted with multiplicities taken into account.

The asymptotics of $\sflow(\lambda; M+tA,M)$ as $t\to\infty$ and related issues
have been extensively studied both for concrete differential operators 
$M+tA$ and in an abstract setting; see e.g. the survey 
\cite{Hempel2} for the history and a recent paper \cite{Simon100DM}
for extensive bibliography. 
Most relevant to our approach are the operator theoretic constructions of 
M.~Birman (see \cite{Birman} and references therein)
 and O.~Safronov \cite{Safronov1,Safronov2,Safronov3}.
We also note that there is a large family of index theorems 
(see e.g. \cite{RobbinSalamon} and references therein) 
which use the notion of the spectral flow; these are not directly 
related to the topic of this paper.

\subsection{Spectral flow, Fredholm index, and spectral shift function}
Let us start by mentioning two other interpretations of the spectral flow; 
the precise statements will be given in Section~\ref{sec.b}.
First, if $A$ is a trace class operator, then 
\begin{equation}
\sflow(\lambda; M+A,M)=\xi(\lambda-0; M+A,M),
\quad \forall \lambda\in\R\setminus\sigma_{ess}(M),
\label{b17}
\end{equation}
where $\xi(\cdot; M+A,M)$ is the  M.~Krein's spectral shift function.
Next, one has
$$
\sflow(\lambda; M+A,M)=\Xi(\lambda;M+A,M),
$$
where the right hand side is defined as the Fredholm index 
(see Section~\ref{sec.b1})
of the 
pair of spectral projections of $M+A$ and $M$, associated 
with the interval $(-\infty,\lambda)$: 
\begin{equation}
\Xi(\lambda; M+A,M)=\iindex(E_{M}(-\infty,\lambda),E_{M+A}(-\infty,\lambda)).
\label{a7}
\end{equation}
These interpretations of spectral flow have now become folklore; 
they have also been used in the abstract
operator theoretic context, in particular in the works on 
operator algebras, see e.g. \cite{CareyPincus,ACS} and references therein.
However, the methods emerging from these interpretations 
have not yet been used to the full extent in the mathematical physics 
literature. This paper aims to fill in this gap.

We consider the function $\Xi$ defined by \eqref{a7}; the precise 
definition is given in Section~\ref{sec.b}. 
We use the intuition coming from the spectral shift function theory 
to provide elementary proofs of a number of simple yet very useful properties of
this function. 
Most importantly, one has the ``chain rule''
\begin{equation}
\Xi(\lambda; M+A_1+A_2,M)
=
\Xi(\lambda; M+A_1,M)+\Xi(\lambda; M+A_1+A_2,M+A_1)
\label{a8}
\end{equation} and the estimates 
(see Theorem~\ref{th.b1})
\begin{equation}
-\rank A_-\leq \Xi(\lambda; M+A,M)\leq \rank A_+,
\qquad 
A_\pm=\frac12(\abs{A}\pm A).
\label{b12}
\end{equation}
In particular, 
\begin{equation}
\pm A\geq 0
\quad \Rightarrow\quad
\pm \Xi(\lambda; M+A,M)\geq 0.
\label{b13}
\end{equation}
These properties are well known in the spectral shift function theory.
In our approach, they 
provide a basis for various monotonicity arguments 
typical for variational technique. 

Next, the function $\Xi$ is related to the eigenvalue counting function by 
\begin{equation}
\Xi(\lambda_1; M+A,M)-\Xi(\lambda_2;M+A,M)
=
N([\lambda_1,\lambda_2);M+A)
-
N([\lambda_1,\lambda_2);M);
\label{b20}
\end{equation}
here $N(\delta; M)$ is the number of eigenvalues of $M$ 
in the interval $\delta$, and we assume that
$\sigma_{ess}(M)\cap [\lambda_1,\lambda_2]=\varnothing$.
A particular case of \eqref{b20} is 
\begin{equation}
\Xi(\lambda; M+A,M)=-N((-\infty,\lambda); M+A), \quad  \lambda<\inf\sigma(M).
\label{b21}
\end{equation}
Because of these properties, $\Xi$ is a useful tool in analysing 
the eigenvalue counting function in the gaps of essential spectrum. 

Further, in Section~\ref{sec.b4} we consider the behaviour of $\Xi$ 
with respect to decompositions
of the Hilbert space into direct sums.
Finally, in Section~\ref{sec.g},
we discuss and provide a new proof of the identity which can 
be interpreted as the Birman-Schwinger principle stated in terms of $\Xi$.

Most of these properties of $\Xi$ appeared before in the literature in various guises,
see e.g. \cite{GMN,GM,BPR,Simon100DM}, 
mainly (but not exclusively) in the framework of the spectral shift function theory, 
which requires some trace class assumptions.
The novelty of this paper is in collecting these properties together 
in a unified and rather general form  and putting them to work in 
problems involving eigenvalues in the gaps outside the trace class scheme.
We also provide streamlined and self-contained proofs of these properties. 

We do not make any attempt here to review the literature on the eigenvalue
counting function, as it it enormously wide. 
Where appropriate, we only mention the works most directly 
related to our approach. 
More references and history can be found in the survey \cite{Hempel2} 
and the recent paper \cite{Simon100DM}.
We also note that an interesting approach to the analysis of eigenvalues in the spectral 
gaps has been developed in \cite{Siedentop,Esteban}. It doesn't seem to be directly 
related to the approach of this paper. Some discussion of the numerical aspects of
calculation of eigenvalues in the gaps and appropriate references
can be found e.g. in \cite{Davies}.

\subsection{Applications}
To illustrate the efficiency of our approach, we apply it 
to provide simple proofs of two results already present in the literature. 
The first one is a theorem of O.~Safronov from \cite{Safronov2} which deals with 
the asymptotics of $\Xi(\lambda; M+tA,M)$ as $t\to\infty$. 
A typical application of this theorem is to the spectral flow of the Schr\"odinger
operator $M$ with a periodic potential, perturbed by the 
operator $A$ of multiplication by a potential which decays at infinity. 
This is discussed in Section~\ref{sec.d}. 

The second result is a theorem of G.~Rozenblum and A.~Sobolev \cite{RS}
which describes the asymptotic eigenvalue distribution of the Landau operator
perturbed by an expanding potential. 
In Sections~\ref{sec.e} and \ref{sec.c}, 
we provide a streamlined proof and a generalisation of this result.

In conclusion, we list other potential areas of application of our technique:

(i) Theorem~\ref{th.e1} can be applied to the analysis of a periodic operator perturbed by 
an expanding potential. 

(ii) 
Analysis of eigenvalues in the gap of the Dirac operator. 
This will require a generalisation of our technique to the case of the operators
which are not lower bounded. 

\subsection{Notation}
For a self-adjoint operator $A$,  the symbols
 $\sigma(A)$, $\sigma_{ess}(A)$,  $E_A(a,b)$, and $N(\delta; A)$ denote the 
 spectrum of $A$, the essential spectrum of $A$,
the  spectral projection of $A$ associated with $(a,b)\subset\R$,
and the total number of eigenvalues (counting multiplicity) of $A$ in the 
interval $\delta$.  
The symbols $\frakS_\infty$ and $\frakS_2$ denote the classes 
of compact and Hilbert-Schmidt operators in a Hilbert space.

\section{The function $\Xi$ }\label{sec.b}
In this section we introduce the function $\Xi$ and discuss its relationship with 
the spectral shift function, the spectral flow and the eigenvalue counting function. 
We discuss the stability of $\Xi$ and prove variational estimates which will be 
crucial for our further analysis.
We also discuss the behaviour of $\Xi$ with respect to 
the decomposition of the Hilbert space into direct sums. 
\subsection{The index of a pair of projections}\label{sec.b1}
Let us recall some background material from \cite{ass}.
A pair of orthogonal projections $P$, $Q$ in a Hilbert space $\HH$
is called Fredholm, if
$$
\{1,-1\}\cap\sigma_{\rm ess}(P-Q)=\varnothing.
$$
In particular, if $P-Q$ is compact, then the pair $P$, $Q$ is Fredholm.
The index of a Fredholm pair is given by the formula
$$
\ind(P,Q)=\dim\Ker(P-Q-I)-\dim\Ker(P-Q+I).
$$
This can be alternatively written as
\begin{equation}
\ind(P,Q)=\dim(\Ran P\cap \Ker Q)-\dim(\Ran Q\cap\Ker P).
\label{b00}
\end{equation}
It is well known (see e.g. \cite[Theorem~4.2]{ass}) that 
$$
\dim \Ker(P-Q-\lambda I)=\dim \Ker(P-Q+\lambda I), 
\quad \lambda\not=\pm 1;
$$
the proof of this is based on the identity
$$
(P-Q)(I-P-Q)=(I-P-Q)(Q-P).
$$
Thus, if $P-Q$ is a trace class operator,  then 
\begin{equation}
\ind(P,Q)=\Tr(P-Q),
\label{b0}
\end{equation}
since all the eigenvalues of $P-Q$ apart from $1$ and $-1$  in the series $\Tr(P-Q)=\sum_k\lambda_k(P-Q)$
cancel out.

If both $(P,Q)$ and $(Q,R)$ are Fredholm pairs
and at least one of the differences $P-Q$ or $Q-R$ is compact, then the pair $(P,R)$
is also Fredholm and the following identity holds true:
\begin{equation}
\label{b1}
\ind(P,R)=\ind(P,Q)+\ind(Q,R).
\end{equation}
See e.g. \cite{ass} for the proof of the last statement and the details.

\subsection{Definition of $\Xi$}\label{sec.b1a}
Let $M$ and $\wt M$ be self-adjoint operators in a Hilbert space $\HH$. 
If $E_M(-\infty,\lambda)$, $E_{\wt M}(-\infty,\lambda)$ is a Fredholm pair, we will 
say that $\Xi(\lambda;\wt M, M)$ exists and define
$$
\Xi(\lambda; \wt M,M):=\ind\bigl(E_{M}(-\infty,\lambda),E_{\wt M}(-\infty,\lambda)\bigr).
$$
It is obvious that $\Xi(\lambda; \wt M,M)$
 is constant on the intervals 
of the set $\R\setminus(\sigma(M)\cup\sigma(\wt M))$ and 
 is left continuous in $\lambda$
on the set $\R\setminus(\sigma_{ess}(M)\cup\sigma_{ess}(\wt M))$.
One has
$$
\Xi(\lambda; \wt M,M)=-\Xi(\lambda; M,\wt M).
$$

We will often use the following simple sufficient condition for the existence
of $\Xi(\lambda; \wt M,M)$. 
Let  $M$ be a self-adjoint lower semi-bounded  operator in $\HH$  and let 
the self-adjoint operator  $A$ in $\HH$ be relatively form-compact
with respect to $M$.
This means that 
$$
\Dom \abs{A}^{1/2}\supset \Dom(M+\gamma I)^{1/2}
\quad
\text{ and }\quad
\abs{A}^{1/2}(M+\gamma I)^{-1/2}\in {\frakS_\infty}
$$
for all sufficiently large $\gamma>0$.
Under this assumption, by the KLMN Theorem (see \cite[Theorem~X.17]{RS2})
the operator $\wt M=M+A$ is well defined in terms of the corresponding  quadratic form.
Using the resolvent identity, we get
\begin{equation}
(\wt M-zI)^{-1}-(M-zI)^{-1}\in {\frakS_\infty}
\quad \forall z\in\C\setminus(\sigma(M)\cup\sigma(\wt M)).
\label{b11}
\end{equation}
By Weyl's theorem on the stability of essential spectrum under compact perturbations,
this implies $\sigma_{ess}(\wt M)=\sigma_{ess}(M)$. 
If $\lambda\in\R\setminus\sigma_{ess}(M)$, then, representing the spectral projections by 
Riesz integrals and using \eqref{b11}, it is easy to see that the difference 
$E_{\wt M}(-\infty,\lambda)-E_M(-\infty,\lambda)$ is compact and therefore
$\Xi(\lambda;\wt M,M)$ exists. 

If both $A_1$ and $A_2$ are form-compact with respect to $M$ and 
$\lambda$ is not in the essential spectrum of $M$, then, by the above 
argument and \eqref{b1}, the ``chain rule'' \eqref{a8} holds true.

\begin{remark*}
In this paper, we  assume most of the time that $M$ is
lower semi-bounded. It is possible to generalise our results to the case of 
non-semibounded $M$. However, this makes our construction and particularly the proofs considerably
more complicated. 
\end{remark*}

The function $\Xi$, with various notation and in various guises, 
appeared in the literature many times. 
Without any attempts at being exhaustive, let us mention a few sources.
In \cite{GMN,GM,BPR}, $\Xi$ was used in the context of the spectral 
shift function theory. 
There is extensive literature on $\Xi$ in the theory of operator algebras,
see e.g. \cite{ACDS,ACS,KMS} and references therein.

\subsection{$\Xi$ and the spectral shift function}

\begin{proposition}\label{prp.ssf}
If $M$ and $A$ are self-adjoint operators and 
$A$ is a trace class operator, then the identity 
\begin{equation}
\Xi(\lambda; M+A,M)=\xi(\lambda-0; M+A,M),
\quad \forall \lambda\in\R\setminus\sigma_{ess}(M)
\label{b17a}
\end{equation}
holds true, 
where $\xi(\cdot; M+A,M)$ is  the M.~Krein's spectral shift function.
\end{proposition}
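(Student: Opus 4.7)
The plan is to show that both sides of \eqref{b17a} equal $\Tr D$, where $D := E_M(-\infty,\lambda)-E_{M+A}(-\infty,\lambda)$. First I would reduce to the case $\lambda\notin\sigma(M)\cup\sigma(M+A)$: both $\Xi(\cdot;M+A,M)$ (as noted just after the definition of $\Xi$) and $\xi(\cdot-0;M+A,M)$ (as follows from Krein's trace formula applied to a smooth function supported in a joint resolvent interval) are left-continuous and locally constant on $\R\setminus(\sigma(M)\cup\sigma(M+A))$. Since $\lambda\notin\sigma_{ess}(M)=\sigma_{ess}(M+A)$, the spectra of $M$ and $M+A$ near $\lambda$ are discrete, so a left neighborhood of $\lambda$ meets the joint resolvent set and we lose nothing by sliding $\lambda$ into it.

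Second, I would fix $\e>0$ such that $(\lambda-\e,\lambda)\cap(\sigma(M)\cup\sigma(M+A))=\varnothing$ and choose $f\in C^\infty(\R)$ with $f\equiv 1$ on $(-\infty,\lambda-\e]$ and $f\equiv 0$ on $[\lambda,\infty)$. The spectral theorem then gives $f(M)=E_M(-\infty,\lambda)$ and $f(M+A)=E_{M+A}(-\infty,\lambda)$, so $D=f(M)-f(M+A)$. Under the running assumption that $M$ is lower semibounded (and hence so is $M+A$, since $A\in\frakS_1$ is bounded), the set $\sigma(M)\cap(-\infty,\lambda]$ is compact, and a bounded Riesz contour representation combined with the resolvent identity $(M+A-z)^{-1}-(M-z)^{-1}=-(M+A-z)^{-1}A(M-z)^{-1}\in\frakS_1$ shows $D\in\frakS_1$; alternatively this is the Birman--Solomyak theorem for smooth functional calculus. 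Identity \eqref{b0} then gives
$$\Xi(\lambda;M+A,M)=\ind\bigl(E_M(-\infty,\lambda),E_{M+A}(-\infty,\lambda)\bigr)=\Tr D.$$

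Third, Krein's trace formula applied to the same $f$ yields
$$\Tr\bigl(f(M+A)-f(M)\bigr)=\int_\R f'(\mu)\,\xi(\mu;M+A,M)\,d\mu.$$
Since $\supp f'\subset(\lambda-\e,\lambda)$, which lies in the joint resolvent set, $\xi$ is constant there with value $\xi(\lambda-0;M+A,M)$, and the integral evaluates to this constant times $\int_\R f'(\mu)\,d\mu=f(\lambda)-f(-\infty)=-1$, i.e.\ to $-\xi(\lambda-0;M+A,M)$. The left-hand side equals $-\Tr D$, so $\Tr D=\xi(\lambda-0;M+A,M)$, and combining with the previous step gives \eqref{b17a}.

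The main obstacle is the trace class bound $D\in\frakS_1$ in the second step: one must know that smooth functions of two self-adjoint operators differing by a trace class perturbation themselves differ in $\frakS_1$. Everything else is bookkeeping around the index--trace identity \eqref{b0}, Krein's trace formula, and the left-continuity of both sides.
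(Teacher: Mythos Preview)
Your proof is correct and follows essentially the same route as the paper: choose a smooth cut-off $f$ that agrees with $\chi_{(-\infty,\lambda)}$ on both spectra, identify $f(M)-f(M+A)$ with the difference of spectral projections, then compute its trace two ways --- via the index identity \eqref{b0} and via Krein's trace formula.

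The only point worth noting is that what you call the ``main obstacle'', namely $D\in\frakS_1$, is not treated as a separate step in the paper at all. Krein's theorem, in the form the paper quotes, already contains the assertion that $\varphi(M+A)-\varphi(M)\in\frakS_1$ whenever $\varphi'\in C_0^\infty(\R)$ and $A\in\frakS_1$; no semiboundedness of $M$ is required. Citing this directly eliminates your Riesz-contour argument (and the lower-semiboundedness you invoke for it, which is not among the hypotheses of the proposition), and also makes your preliminary reduction to $\lambda\notin\sigma(M)\cup\sigma(M+A)$ unnecessary: one can simply choose $\delta>0$ with $(\lambda-\delta,\lambda)$ disjoint from both spectra and work with $\supp\varphi'\subset(\lambda-\delta,\lambda)$, regardless of whether $\lambda$ itself is an eigenvalue.
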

\begin{proof}
The key point is  the M.~Krein's theorem that if 
$\varphi'\in C_0^\infty(\R)$, then $\varphi(M+A)-\varphi(M)$ belongs to the 
trace class and the  trace formula
$$
\Tr(\varphi(M+A)-\varphi(M))
=
\int_{-\infty}^\infty \varphi'(\lambda)\xi(\lambda; M+A,M)d\lambda
$$
holds true. 
Since $\lambda$ is not in the essential spectrum of $M$, there exists $\delta>0$ 
such that $(\lambda-\delta,\lambda)\cap \sigma(M)=\varnothing$
and $(\lambda-\delta,\lambda)\cap \sigma(M+A)=\varnothing$.
Then we can choose $\varphi$ with $\supp \varphi'\subset(\lambda-\delta,\lambda)$ 
and $\varphi(\lambda-\delta)=1$, $\varphi(\lambda)=0$. 
Then, using \eqref{b0}, we get
\begin{multline*}
\Tr(\varphi(M+A)-\varphi(M))
=
\Tr(E_{M+A}(-\infty,\lambda)-E_M(-\infty,\lambda))
\\
=
\iindex(E_{M+A}(-\infty,\lambda),E_{M}(-\infty,\lambda))
=
-\Xi(\lambda; M+A,M).
\end{multline*}
On the other hand, since $\xi(\lambda; M+A,M)$ is constant 
on $(\lambda-\delta,\lambda)$, we get
$$
\int_{-\infty}^\infty \xi(\lambda; M+A,M)\varphi'(\lambda)d\lambda
=
-\xi(\lambda-0; M+A,M); 
$$
this proves the claim. 
\end{proof}
As mentioned in the introduction, this statement can be regarded as folklore; 
it was explicitly stated and used e.g. in \cite{Safronov3,ACS,KMS}.

\subsection{$\Xi$ and  the eigenvalue counting function.}
\begin{theorem}\label{th.b5}
Let $M$ be a lower semibounded self-adjoint operator, let $A$ be a
self-adjoint operator which is form-compact with respect to $M$
and $[\lambda_1,\lambda_2]\subset\R\setminus\sigma_{ess}(M)$.
Then the identities \eqref{b20}, \eqref{b21} hold true. 
\end{theorem}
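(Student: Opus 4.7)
The plan is to reduce both identities to the index formula \eqref{b00} for pairs of projections, together with the chain rule \eqref{b1}. All spectral projections involved are well defined and form Fredholm pairs: by the discussion in Section~\ref{sec.b1a}, since $A$ is form-compact with respect to $M$ and $\lambda_1,\lambda_2\notin\sigma_{ess}(M)=\sigma_{ess}(M+A)$, both differences $E_{M+A}(-\infty,\lambda_j)-E_M(-\infty,\lambda_j)$ are compact. Moreover, the intervals $[\lambda_1,\lambda_2)$ contain no essential spectrum of either operator, so $E_M[\lambda_1,\lambda_2)$ and $E_{M+A}[\lambda_1,\lambda_2)$ are finite rank projections whose ranks are $N([\lambda_1,\lambda_2);M)$ and $N([\lambda_1,\lambda_2);M+A)$ respectively.

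For \eqref{b20}, write $P_j=E_M(-\infty,\lambda_j)$ and $Q_j=E_{M+A}(-\infty,\lambda_j)$ for $j=1,2$. Applying the chain rule to the triple $(P_2,P_1,Q_1)$ and then to $(P_2,Q_1,Q_2)$ (all pairs involving $P_1,P_2$ or $Q_1,Q_2$ differ by a finite rank projection, hence are Fredholm) yields
\begin{equation*}
\Xi(\lambda_2;M+A,M)=\ind(P_2,Q_2)=\ind(P_2,P_1)+\ind(P_1,Q_1)+\ind(Q_1,Q_2),
\end{equation*}
so that
\begin{equation*}
\Xi(\lambda_1;M+A,M)-\Xi(\lambda_2;M+A,M)=\ind(P_1,P_2)-\ind(Q_1,Q_2).
\end{equation*}
Since $P_1\leq P_2$, one has $\Ran P_1\subset\Ran P_2$, so $\Ran P_1\cap \Ker P_2=\{0\}$, while $\Ran P_2\cap\Ker P_1=\Ran(P_2-P_1)=\Ran E_M[\lambda_1,\lambda_2)$. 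Formula \eqref{b00} therefore gives $\ind(P_1,P_2)=-N([\lambda_1,\lambda_2);M)$, and the analogous computation yields $\ind(Q_1,Q_2)=-N([\lambda_1,\lambda_2);M+A)$. Substituting gives \eqref{b20}.

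For \eqref{b21}, if $\lambda<\inf\sigma(M)$ then $E_M(-\infty,\lambda)=0$. Since $\lambda$ is below $\inf\sigma_{ess}(M+A)$, the spectrum of $M+A$ in $(-\infty,\lambda)$ consists of at most finitely many eigenvalues of finite multiplicity, so $E_{M+A}(-\infty,\lambda)$ is finite rank. Applying \eqref{b00} to the pair $(0,E_{M+A}(-\infty,\lambda))$ gives
\begin{equation*}
\Xi(\lambda;M+A,M)=-\dim\Ran E_{M+A}(-\infty,\lambda)=-N((-\infty,\lambda);M+A),
\end{equation*}
which is \eqref{b21}.

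There is no substantive obstacle here; the only point that requires care is verifying that the intermediate pairs in the chain rule are Fredholm, which is immediate from the fact that $P_1-P_2$ and $Q_1-Q_2$ are finite rank under the hypothesis on $[\lambda_1,\lambda_2]$.
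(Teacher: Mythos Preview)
Your proof is correct and follows essentially the same route as the paper: decompose $\ind(P_2,Q_2)$ via the chain rule \eqref{b1} through the intermediate projections $P_1$ and $Q_1$, then evaluate the ``same-operator'' indices $\ind(P_1,P_2)$ and $\ind(Q_1,Q_2)$. The only cosmetic differences are that the paper computes these two indices using the trace formula \eqref{b0} rather than \eqref{b00}, and derives \eqref{b21} from \eqref{b20} by sending $\lambda_1\to-\infty$ rather than arguing directly from $E_M(-\infty,\lambda)=0$.
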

\begin{proof} 
Using  \eqref{b1} and \eqref{b0}, we get
\begin{multline*}
\Xi(\lambda_1; M+A,M)
=
\ind(E_M(-\infty,\lambda_1),E_{M+A}(-\infty,\lambda_1))
\\
=
\ind(E_M(-\infty,\lambda_1),E_{M}(-\infty,\lambda_2))
+
\ind(E_M(-\infty,\lambda_2),E_{M+A}(-\infty,\lambda_2))
\\
+
\ind(E_{M+A}(-\infty,\lambda_2),E_{M+A}(-\infty,\lambda_1))
\\
=
\Tr(E_M(-\infty,\lambda_1)-E_{M}(-\infty,\lambda_2))
+
\Xi(\lambda_2; M+A,M)
\\
+
\Tr(E_{M+A}(-\infty,\lambda_2)-E_{M+A}(-\infty,\lambda_1))
\\
=
-N([\lambda_1,\lambda_2);M)+\Xi(\lambda_2; M+A,M)+N([\lambda_1,\lambda_2); M+A);
\end{multline*}
this proves \eqref{b20}. 
Identity \eqref{b21} follows by taking $\lambda_1\to-\infty$ and $\lambda_2=\lambda$. 
\end{proof}

This statement is well known and (if stated in terms of the spectral flow) is intuitively obvious. 
\subsection{Stability of $\Xi$}
The following result is essentially well known; see \cite[Theorem~3.12]{GM}
for a very similar statement. 
However, in order to make this text self-contained, we provide
a proof (which is not significantly different from the  proof of 
\cite{GM}).
\begin{theorem}\label{stab}
Let $M$ and $\wt M$ be lower semi-bounded self-adjoint operators 
and suppose that $\Xi(\lambda; \wt M,M)$ exists for some 
$\lambda\in\R\setminus(\sigma(M)\cup\sigma(\wt M))$.
Let $M_n$ and $\wt M_n$ be two sequences of self-adjoint operators such that
$M_n\to M$ and $\wt M_n\to \wt M$ in the norm resolvent sense and 
$M_n$, $\wt M_n$ are uniformly bounded from below:  
$\gamma I\leq M_n$, $\gamma I\leq \wt M_n$ for some $\gamma\in\R$ and all $n$. 
Then for all sufficiently large $n$, $\Xi(\lambda; \wt M_n, M_n)$ 
exists and equals $\Xi(\lambda; \wt M,M)$. 
\end{theorem}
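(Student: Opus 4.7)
The plan is to reduce the claim to the norm-stability of the Fredholm index of a pair of projections. Set $P_n := E_{M_n}(-\infty,\lambda)$, $Q_n := E_{\wt M_n}(-\infty,\lambda)$, and let $P, Q$ denote the corresponding projections for $M, \wt M$; write $T_n := P_n - Q_n$ and $T := P - Q$. The argument then has two steps: first establish the operator-norm convergence $\|T_n - T\| \to 0$, and second deduce $\ind(P_n,Q_n) = \ind(P,Q)$ from this norm convergence.

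For the first step I would represent both projections as Riesz contour integrals over a single fixed contour $\Gamma$. The uniform lower bound $M_n \geq \gamma I$ confines $\sigma(M_n)\cap(-\infty,\lambda)$ to $[\gamma,\lambda)$, and the hypothesis $\lambda \in \rho(M)$ together with norm resolvent convergence forces $\dist(\lambda,\sigma(M_n)) \geq d/2$ for all large $n$, where $d := \dist(\lambda,\sigma(M)) > 0$; the same applies to $\wt M_n$. Hence a rectangular contour $\Gamma$ enclosing $[\gamma - 1,\lambda - d/2]$ can be chosen once and for all to lie in $\rho(M)\cap\rho(\wt M)$ and, for $n$ large, in $\rho(M_n)\cap\rho(\wt M_n)$. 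Since $(M_n-z)^{-1} \to (M-z)^{-1}$ in operator norm uniformly on compact subsets of $\rho(M)$, the identity
$$
P_n - P = -\frac{1}{2\pi i}\oint_\Gamma\bigl[(M_n-z)^{-1}-(M-z)^{-1}\bigr]\,dz
$$
gives $\|P_n - P\|\to 0$, and similarly $\|Q_n - Q\|\to 0$, hence $\|T_n - T\|\to 0$.

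The heart of the argument, and its main obstacle, is the second step: operator-norm convergence of $T_n$ does not by itself preserve the multiplicities at $\pm 1$, since eigenvalues of $T_n$ at $\pm 1$ might split into clusters of nearby eigenvalues drifting into the open interval $(-1,1)$. The resolution is the spectral symmetry $\dim\ker(T_n-\mu I) = \dim\ker(T_n+\mu I)$ for every $\mu \neq \pm 1$, valid for any difference of two orthogonal projections and recalled in Section~\ref{sec.b1}. Choose $\eta > 0$ so small that $\sigma(T)\cap\bigl([1-\eta,1]\cup[-1,-1+\eta]\bigr)\subset\{-1,+1\}$, and set $k_\pm := \dim\ker(T\mp I)$. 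A standard Riesz projection argument applied to $T_n\to T$ on contours encircling $[1-\eta,1]$ and $[-1,-1+\eta]$ shows that for all $n$ large, both intervals avoid $\sigma_{\rm ess}(T_n)$ (so $(P_n,Q_n)$ is a Fredholm pair) and that the spectral projections $E_{T_n}((1-\eta,1])$ and $E_{T_n}([-1,-1+\eta))$ have ranks $k_+$ and $k_-$ respectively. The spectral symmetry then forces the eigenvalue contributions of $T_n$ in the open intervals $(1-\eta,1)$ and $(-1,-1+\eta)$ to cancel in the difference, so
$$
\ind(P_n,Q_n) = \dim\ker(T_n-I) - \dim\ker(T_n+I) = k_+ - k_- = \ind(P,Q),
$$
as required.
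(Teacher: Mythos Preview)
Your argument is correct and follows the same two-step scheme as the paper: first establish $\|P_n-P\|\to0$ and $\|Q_n-Q\|\to0$ (the paper invokes \cite[Theorem~VIII.23(b)]{RS1} in place of your explicit contour integral, using the uniform lower bound to rewrite $P=E_M(\gamma-1,\lambda)$, but this is the same content), and then pass from norm convergence of $T_n=P_n-Q_n$ to stability of the index.

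Where you differ from the paper is in the second step, and here you are in fact more careful. The paper asserts that $\sigma(T_n)\cap(-1,1)\subset(-1+\delta,1-\delta)$ for all large $n$, which would make the symmetry argument unnecessary; but this assertion is not justified by norm convergence alone and can fail. (Take $P$ the projection onto $e_1$, $Q$ the projection onto $e_2$, and $Q_n$ the projection onto $\cos\theta_n\,e_1+\sin\theta_n\,e_2$ with $\theta_n\nearrow\pi/2$: then $P-Q_n$ has eigenvalues $\pm\sin\theta_n$, which lie strictly in $(-1,1)$ yet approach $\pm1$.) The identity the paper actually uses,
\[
\ind(P_n,Q_n)=\rank E_{T_n}(1-\delta,2)-\rank E_{T_n}(-2,-1+\delta),
\]
is nonetheless true, but its correct justification is exactly the spectral symmetry you invoke: any eigenvalues of $T_n$ drifting into $(1-\delta,1)$ are paired with eigenvalues in $(-1,-1+\delta)$ of equal multiplicity and cancel in the difference. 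So your explicit appeal to the symmetry is not a detour but the right way to complete the argument.
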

\begin{remark*}
It is not difficult to construct an example showing that the assumption of 
the existence of a uniform lower bound 
for $M_n$ and $\wt M_n$ cannot be dropped
from the hypothesis of this theorem.
\end{remark*}
\begin{proof}
1. 
Let us denote 
$$
P=E_M(-\infty,\lambda), 
\quad 
P_n=E_{M_n}(-\infty,\lambda),
\quad
Q=E_{\wt M}(-\infty,\lambda), 
\quad
Q_n=E_{\wt M_n}(-\infty,\lambda). 
$$
By our assumptions, 
$P=E_M(\gamma-1,\lambda)$, $P_n=E_{M_n}(\gamma-1,\lambda)$.
Since $\lambda$ and $\gamma-1$ are not in the spectrum of $M$,
by \cite[Theorem VIII.23(b)]{RS1}
it follows that $\norm{P_n-P}\to0$ as $n\to\infty$. 
In the same way, $\norm{Q_n-Q}\to0$ as $n\to\infty$. Thus, 
\begin{equation}
\norm{ (P_n-Q_n)-(P-Q)}\to 0 \quad \text{ as $n\to\infty$.}
\label{b29}
\end{equation}

2. 
Since $(P,Q)$ is a Fredholm pair, there exists $\delta>0$ such that 
\begin{equation}
\sigma(P-Q)\cap(-1,1)\subset [-1+2\delta,1-2\delta].
\label{b30}
\end{equation}
Then $-1+\delta$ and $1-\delta$ are not in the spectrum of $P-Q$ 
and so, using 
\eqref{b29} and applying \cite[Theorem VIII.23(b)]{RS1}
again, we get
\begin{align}
\norm{E_{P_n-Q_n}(1-\delta,2)-E_{P-Q}(1-\delta,2)}&\to0,
\label{b31}
\\
\norm{E_{P_n-Q_n}(-2,-1+\delta)-E_{P-Q}(-2,-1+\delta)}&\to0.
\label{b32}
\end{align}
In particular, $\rank E_{P_n-Q_n}(1-\delta,2)$ and $\rank E_{P_n-Q_n}(-2,-1+\delta)$
are finite for large $n$ and therefore $\Xi(\lambda; \wt M_n,M_n)$ exists.

3. 
By the definition of index and \eqref{b30}, we have 
\begin{equation}
\Xi(\lambda; \wt M,M)
=
\rank E_{P-Q}(1-\delta,2)-\rank E_{P-Q}(-2,-1+\delta).
\label{b33}
\end{equation}
By \eqref{b29} and \eqref{b30}, we have 
$\sigma(P_n-Q_n)\cap(-1,1)\subset (-1+\delta,1-\delta)$
for all sufficiently large $n$, and then 
\begin{equation}
\Xi(\lambda; \wt M_n,M_n)
=
\rank E_{P_n-Q_n}(1-\delta,2)-\rank E_{P_n-Q_n}(-2,-1+\delta).
\label{b34}
\end{equation}
Combining \eqref{b31}--\eqref{b34}, we get the required 
statement. 
\end{proof}

\subsection{$\Xi$ as spectral flow}\label{sec.b3}
Let $M$ be a lower semi-bounded self-adjoint operator and let $A$ be 
form-compact with respect to $M$. Here we prove that 
\begin{equation}
\sflow(\lambda; M+A,M)=\Xi(\lambda; M+A,M), 
\quad
\lambda\in \R\setminus(\sigma(M)\cap \sigma(M+A)).
\label{a1}
\end{equation}
This statement is not used elsewhere in the paper and is
given here only in order to provide some motivation and help comparison with 
other results in the area.

Let us choose an interval $(a,b)\subset\R\setminus\sigma_{ess}(M)$ such that  $\lambda\in(a,b)$.
First suppose that there exists $\lambda_0<\lambda$, 
$\lambda_0\in(a,b)$ such that 
\begin{equation}
\lambda_0\notin\sigma(M+tA)
\quad \text{ for all $t\in[0,1]$.}
\label{b2}
\end{equation}
It is easy to see that the resolvent $(M+tA-zI)^{-1}$, $z\in \C\setminus\R$, 
is continuous in $t\in[0,1]$ in the operator norm.
By the stability Theorem~\ref{stab}, we conclude that $\Xi(\lambda_0; M+tA,M)$
is independent of  $t\in[0,1]$ and therefore  
$\Xi(\lambda_0; M+A,M)=0$. 
Using \eqref{b20}, we get
$$
\Xi(\lambda; M+A,M)=N([\lambda_0,\lambda); M)-N([\lambda_0,\lambda);M+A).
$$
The r.h.s. equals the net flux of eigenvalues of $M+tA$ outward from the 
interval $(\lambda_0,\lambda)$. By \eqref{b2}, the flux through $\lambda_0$ equals zero. 
Thus, it is clear that \eqref{a1} holds true. 

In general, the point $\lambda_0$ as in \eqref{b2} may not exist, but we can always 
find  a finite open cover of  $[0,1]$ by sufficiently 
small subintervals $\delta_i$ such that for each family $\{M_t\mid t\in \delta_i\}$, 
the point $\lambda_0$ can be chosen appropriately. Then formula \eqref{a1} can be obtained by 
combining the formulas corresponding to all the subintervals.

\subsection{Variational estimates for  $\Xi$}\label{sec.b2}
\begin{theorem}\label{th.b1}
Let $M$ be a  
lower semi-bounded self-adjoint operator and let the self-adjoint  operator 
$A$ be relatively form bounded with respect to $M$ with a relative bound less than one.
Let $\wt M$ be defined as a form sum $\wt M=M+A$; 
assume that $\Xi(\lambda; \wt M,M)$ exists. 
Then the estimates \eqref{b12} and \eqref{b13} hold true.
\end{theorem}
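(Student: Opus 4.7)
The plan is to deduce both inequalities in \eqref{b12} directly from the intersection formula \eqref{b00} for the Fredholm index $\Xi(\lambda;\wt M,M)=\ind(P,Q)$, where
$$
P:=E_M(-\infty,\lambda),\qquad Q:=E_{\wt M}(-\infty,\lambda).
$$
It suffices to establish the two separate bounds
$$
\dim(\Ran P\cap\Ker Q)\le \rank A_+,\qquad \dim(\Ran Q\cap\Ker P)\le \rank A_-,
$$
which by \eqref{b00} immediately give \eqref{b12}; the sign implication \eqref{b13} then drops out as a special case, since $A\ge 0$ forces $A_-=0$ and $A\le 0$ forces $A_+=0$.

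The core of the argument is variational and symmetric in the two intersections. Consider any nonzero $u\in \Ran P\cap \Ker Q$. Because $M$ is lower semi-bounded, $\Ran P=\Ran E_M[\gamma',\lambda)$ for some $\gamma'\in\R$, so $u$ lies in $\Dom M^{1/2}$ with a spectral measure supported in the open interval $(-\infty,\lambda)$, which forces the strict form inequality $\langle Mu,u\rangle<\lambda\|u\|^2$. The hypothesis that $A$ is form-bounded with relative bound less than one ensures $\Dom M^{1/2}=\Dom\wt M^{1/2}\subset \Dom|A|^{1/2}$, so $u$ also lies in the form domain of $\wt M$ and $|A|$; since $u\in\Ker Q=\Ran E_{\wt M}[\lambda,\infty)$, the spectral theorem yields $\langle\wt M u,u\rangle\ge\lambda\|u\|^2$. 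Subtracting gives $\langle Au,u\rangle>0$ in the quadratic form sense. I now claim that the bounded operator $A_+^{1/2}$ is injective on the intersection $\Ran P\cap\Ker Q$: if $A_+^{1/2}u=0$, then $\langle A_+u,u\rangle=0$ and hence $\langle Au,u\rangle=-\langle A_-u,u\rangle\le 0$, contradicting what we just proved. This injectivity yields $\dim(\Ran P\cap\Ker Q)\le\rank A_+^{1/2}=\rank A_+$. Reversing the inequalities throughout, for nonzero $u\in\Ran Q\cap\Ker P$ one obtains $\langle Au,u\rangle<0$, forcing injectivity of $A_-^{1/2}$ on this subspace and therefore the analogous bound by $\rank A_-$.

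The main technical point that requires care is the clean placement of vectors in the appropriate form domains so that each of the expressions $\langle Mu,u\rangle$, $\langle\wt M u,u\rangle$, $\langle A_{\pm}u,u\rangle$, and $A_\pm^{1/2}u$ is meaningful. This is where the relative form bound $<1$ is essential: it gives the equality $\Dom M^{1/2}=\Dom \wt M^{1/2}$ and the inclusion into $\Dom|A|^{1/2}$, and it also guarantees that $\wt M$ is itself lower semi-bounded, which is what lets us treat $\Ran Q$ symmetrically with $\Ran P$. Everything else reduces to a direct application of the spectral theorem, so once these form-domain issues are dispatched the argument is essentially three lines of inequality manipulation.
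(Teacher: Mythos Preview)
Your argument is correct and is essentially the paper's own proof: both reduce the bound $\Xi(\lambda;\wt M,M)\le\rank A_+$ to showing that no nonzero vector in $\Ran E_M(-\infty,\lambda)\cap\Ker E_{\wt M}(-\infty,\lambda)$ can lie in $\Ker A_+$, using the strict form inequality $m[\psi,\psi]<\lambda\|\psi\|^2\le\wt m[\psi,\psi]$; the paper phrases this as a contradiction (``if the dimension exceeded $\rank A_+$ there would be such a $\psi$''), while you phrase it equivalently as injectivity of $A_+^{1/2}$ on that subspace. One small slip: $A_+^{1/2}$ need not be bounded in general, but since the case $\rank A_+=\infty$ is trivial you may assume $\rank A_+<\infty$, whence $A_+$ is finite-rank and bounded, and your injectivity argument goes through unchanged.
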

\begin{proof}
Let us prove the second inequality in \eqref{b12}; the proof of the first one 
is analogous. 
If $\rank A_+=\infty$, there is nothing to prove; so let us assume $\rank A_+<\infty$. 
By \eqref{b00},
the desired statement will follow if we prove that 
$$
\dim(\Ran E_{M}(-\infty,\lambda)\cap \Ker E_{\wt M}(-\infty,\lambda))
\leq \rank A_+.
$$
Suppose to the contrary that 
$$
\dim(\Ran E_{M}(-\infty,\lambda)\cap \Ker E_{\wt M}(-\infty,\lambda))
> \rank A_+.
$$
Then there exists $\psi\not=0$, $\psi\in (\Ran E_{M}(-\infty,\lambda)\cap \Ker E_{\wt M}(-\infty,\lambda))$
and $A_+\psi=0$. 
Denote by $m$ and $\wt m$ the sesquilinear forms corresponding to $M$ and $\wt M$. 
Since $\psi\in \Ran E_{M}(-\infty,\lambda)$,
it follows that $\psi\in\dom(m)=\dom(\wt m)$. We have 
\begin{equation}
m[\psi,\psi]<\lambda\norm{\psi}^2
\quad \text{ and }\quad 
\wt m[\psi,\psi]\geq \lambda \norm{\psi}^2.
\label{b16}
\end{equation}
On the other hand, since $A_+\psi=0$, we have $\wt m[\psi,\psi]\leq m[\psi,\psi]$, 
which is a contradiction with \eqref{b16}.
\end{proof}
This result immediately implies the following monotonicity principle, variants
of which have been used before, e.g. in \cite{Safronov3,Simon100DM}:

\begin{corollary}\label{cr.b2}
Let $M_1=M+A_1$, $M_2=M+A_2$, where the operator $M$ is self-adjoint and 
lower semibounded and 
the self-adjoint operators $A_1$ and $A_2$ are form compact with respect to $M$.  
Let $\lambda\in\R\setminus\sigma_{ess}(M)$ and suppose that $M+A_2\geq M+A_1$; then 
$$
\Xi(\lambda; M+A_2,M)\geq \Xi(\lambda; M+A_1,M).
$$
\end{corollary}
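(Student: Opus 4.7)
The plan is to reduce to a non-negative perturbation using the chain rule \eqref{a8}, and then invoke the sign implication \eqref{b13}. This is a very short argument once the setup is in place.

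First I would apply the chain rule. Writing $M+A_2 = (M+A_1) + (A_2-A_1)$, the chain rule \eqref{a8} (with $A_1$ playing the role of the first perturbation and $A_2-A_1$ the second) gives
$$
\Xi(\lambda; M+A_2,M) = \Xi(\lambda; M+A_1,M) + \Xi(\lambda; M+A_2, M+A_1),
$$
so the corollary is equivalent to $\Xi(\lambda; M+A_2, M+A_1)\geq 0$. To justify invoking \eqref{a8} one has to check that the form $a_2-a_1$ (i.e.\ the form of the would-be difference $A_2-A_1$) is form-compact with respect to $M+A_1$. This follows because $A_1,A_2$ are form-compact with respect to $M$, the form domain of $M+A_1$ coincides with that of $M$, and for $\gamma$ large enough the norms of $(M+\gamma I)^{1/2}$ and $(M+A_1+\gamma I)^{1/2}$ are equivalent on this common domain.

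Second I would deduce $\Xi(\lambda; M+A_2,M+A_1)\geq 0$ from the hypothesis $M+A_2\geq M+A_1$. This inequality says precisely that the form of the perturbation passing from $M+A_1$ to $M+A_2$ is non-negative, which is the form-level statement \eqref{b13}. Strictly speaking, Theorem~\ref{th.b1} is stated for a self-adjoint perturbation, but its proof is purely variational: it uses only the inequality $\widetilde m[\psi,\psi]\leq m[\psi,\psi]$ on a test vector with $A_+\psi=0$. Replacing $(M,\widetilde m)$ by $(M+A_1,\widetilde m_2)$ and $(m,\widetilde m)$ by $(\widetilde m_1,\widetilde m_2)$, the same argument shows that $\Ran E_{M+A_2}(-\infty,\lambda)\cap \Ker E_{M+A_1}(-\infty,\lambda)=\{0\}$, whence by \eqref{b00} $\Xi(\lambda; M+A_2,M+A_1)\geq 0$. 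Combining with the chain rule identity above yields the corollary.

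The only real obstacle is this last technical point: $A_2-A_1$ need not be a self-adjoint operator in the usual sense, so one either has to read Theorem~\ref{th.b1} at the level of quadratic forms (which its proof supports without change) or introduce the self-adjoint operator associated with the closed, non-negative, relatively form-compact form $a_2-a_1$. In either reading the argument is routine.
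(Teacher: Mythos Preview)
Your argument is correct and is essentially the same as the paper's: apply the chain rule \eqref{a8} and then invoke the sign implication \eqref{b13} from Theorem~\ref{th.b1} to conclude $\Xi(\lambda; M+A_2,M+A_1)\geq 0$. You are in fact more careful than the paper about the technical point that $A_2-A_1$ need only be understood as a form, which the paper's one-line proof glosses over but which, as you note, the variational proof of Theorem~\ref{th.b1} accommodates without change.
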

Note that  $M+A_2\geq M+A_1$ can usually be 
written in a simpler form $A_2\geq A_1$, but  this requires that
the quadratic forms corresponding to  
$A_2$ and $A_1$ are well defined. 

\begin{proof}
By the ``chain rule" \eqref{a8}, 
$$
\Xi(\lambda; M+A_2,M)=\Xi(\lambda; M+A_2,M+A_1)+\Xi(\lambda; M+A_1,M);
$$
by Theorem~\ref{th.b1}, the first term in the r.h.s. is non-negative. 
\end{proof}

A simple example of the application of this monotonicity principle is an estimate of the 
number of eigenvalues in the gap of $M$ 
when the perturbation $A$ can be represented as $A=B-C$ with $B\geq0$ and $C\geq0$.  
Indeed, if $[\lambda_1,\lambda_2]\cap\sigma(M)=\varnothing$, then by \eqref{b20} 
we have
\begin{multline*}
N([\lambda_1,\lambda_2); M+B-C)
=
\Xi(\lambda_1; M+B-C)-\Xi(\lambda_2; M+B-C)
\\
\leq
\Xi(\lambda_1; M+B,M)-\Xi(\lambda_2; M-C,M);
\end{multline*}
now the right hand side can be evaluated, for example, by 
using the Birman-Schwinger principle, see \eqref{b23} and \eqref{b24} below.
This argument has been used before, see e.g. \cite{Simon100DM}.

\begin{corollary}\label{cr.b3}
Let $M$ be a lower semibounded self-adjoint operator such that $[\lambda-a,\lambda+a]\cap\sigma(M)=\varnothing$ 
for some $\lambda\in \R$ and $a>0$. 
Let $A$ and $B$ be compact self-adjoint operators. Then 
\begin{align*}
\Xi(\lambda; M+A+B,M)
&\leq 
\Xi(\lambda-a; M+A,M)+N((a,\infty);B),
\\
\Xi(\lambda; M+A+B,M)
&\geq
\Xi(\lambda+a; M+A,M)-N((-\infty,-a);B).
\end{align*}
In particular, 
\begin{equation}
-N((-\infty,-a); B)
\leq 
\Xi(\lambda; M+B,M)
\leq 
N((a,\infty); B).
\label{a9}
\end{equation}
\end{corollary}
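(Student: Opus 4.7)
The idea is to split $B$ by spectral cut-off at the levels $\pm a$ and combine the chain rule for $\Xi$ with the variational bound of Theorem~\ref{th.b1}, while carefully handling the non-compact shift by $\pm aI$ using the gap assumption on $M$. Set
\[
B_+=BE_B((a,\infty)),\qquad B_-=BE_B((-\infty,-a)),\qquad B_0=B-B_+-B_-,
\]
so that $B_+\ge 0$ has rank $N((a,\infty);B)$, $B_-\le 0$ has rank $N((-\infty,-a);B)$, and $\|B_0\|\le a$. In particular, as bounded self-adjoint operators, $B_--aI\le B\le B_++aI$.

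For the upper bound I would apply the general chain rule \eqref{b1} for Fredholm pairs directly to the spectral projections, inserting $E_{M+A+B_++aI}(-\infty,\lambda)$ between $E_M(-\infty,\lambda)$ and $E_{M+A+B}(-\infty,\lambda)$:
\[
\Xi(\lambda;M+A+B,M)=\Xi(\lambda;M+A+B_++aI,M)+\Xi(\lambda;M+A+B,M+A+B_++aI).
\]
All the projection differences here are compact, since $A,B,B_+$ are compact and since the gap assumption gives $E_M(-\infty,\lambda)=E_M(-\infty,\lambda-a)$. In the second summand, the perturbation is $V:=B-B_+-aI=B_0+B_--aI\le 0$, so $V_+=0$ and Theorem~\ref{th.b1} forces that summand to be $\le 0$.

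The first summand is handled using the gap. Since $[\lambda-a,\lambda)\cap\sigma(M)=\varnothing$, we have $E_M(-\infty,\lambda)=E_M(-\infty,\lambda-a)$; coupling this with the elementary shift identity $E_{X+aI}(-\infty,\lambda)=E_X(-\infty,\lambda-a)$, applied to $X=M+A+B_+$, yields
\[
\Xi(\lambda;M+A+B_++aI,M)=\Xi(\lambda-a;M+A+B_+,M).
\]
Now $A$ and $B_+$ are both compact, hence form-compact with respect to $M$, so the chain rule \eqref{a8} applies and splits this as $\Xi(\lambda-a;M+A,M)+\Xi(\lambda-a;M+A+B_+,M+A)$, where the last summand is at most $\rank B_+=N((a,\infty);B)$ by Theorem~\ref{th.b1} applied to the positive perturbation $B_+$ of $M+A$. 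Combining with the previous step gives the upper bound.

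The lower bound is obtained by a completely symmetric argument starting from $B\ge B_--aI$ and using the first inequality in \eqref{b12}. Finally, \eqref{a9} is the special case $A=0$, since the gap assumption trivially gives $\Xi(\lambda\pm a;M,M)=0$. The main subtlety throughout is that $aI$ is not a compact perturbation, so neither Corollary~\ref{cr.b2} nor the chain rule \eqref{a8} can be invoked with $aI$ treated as part of the perturbation; one has to work at the level of spectral projections via \eqref{b1} and absorb the shift by exploiting the spectral gap of $M$.
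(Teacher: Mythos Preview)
Your proof is correct and follows essentially the same approach as the paper: the same spectral decomposition of $B$ (your $B_0,B_+,B_-$ are the paper's $B_1,(B_2)_+,(B_2)_-$), the same use of the chain rule \eqref{b1} at the level of Fredholm pairs together with Theorem~\ref{th.b1}, and the same absorption of the shift $aI$ via the gap identity $E_M(-\infty,\lambda)=E_M(-\infty,\lambda-a)$. The only cosmetic difference is the ordering of intermediate operators in the chain (you insert $M+A+B_++aI$ in one step, while the paper inserts $M+A+aI$ and then $M+A+B_1$).
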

Note that \eqref{a9} is an improvement of \eqref{b12}, given some 
information on the width of the spectral gap of $M$ around $\lambda$. 
\begin{proof}
Let us write $B=B_1+B_2$, where $\norm{B_1}\leq a$ and 
$\rank(B_2)_+=N((a,\infty);B)$, $\rank(B_2)_-=N((-\infty,-a);B)$. 
Then, by \eqref{b1} and  Theorem~\ref{th.b1}, one has
\begin{multline*}
\Xi(\lambda; M+A+B,M)
=
\Xi(\lambda; M+A+B_1+B_2,M+A+B_1)
\\
+
\Xi(\lambda; M+A+a I +(B_1-aI),M+A+a I)
+
\Xi(\lambda; M+A+a I,M)
\\
\leq 
\rank(B_2)_+ 
+
\Xi(\lambda; M+A+a I,M)
\\
=
N((a,\infty);B)+\Xi(\lambda; M+A+a I,M+a I)
=
N((a,\infty);B)+\Xi(\lambda-a; M+A,M).
\end{multline*}
This proves the upper bound for $\Xi(\lambda; M+A+B,M)$; 
the lower bound is proven in an analogous way. 
\end{proof}

Theorem~\ref{th.b1} and Corollary~\ref{cr.b3} appeared before in 
\cite{BPR} in a somewhat less general form.

\subsection{Orthogonal  sums and a ``diagonalisation trick''}\label{sec.b4}
Here we discuss the behaviour of $\Xi$ with respect to orthogonal sum decompositions of the Hilbert space $\HH$. 
First we state a trivial yet useful observation. 
Let $M$ be a lower semibounded self-adjoint operator in $\HH$ and let $A$ be 
a self-adjoint operator which is form-compact with respect to $M$.
Next, let $P$ and $Q$ be orthogonal
projections in $\HH$ such that $P+Q=I$. Assume that $M$ is reduced
by the orthogonal decomposition $\HH=\Ran P\oplus \Ran Q$; 
this means that 
$$
P\Dom(M)\subset\Dom(M), \quad Q\Dom(M)\subset\Dom(M)
$$
and 
$$
PMP\psi=MP\psi, \quad QMQ\psi=MQ\psi,\quad  
\text{ for all $\psi\in\Dom(M)$.}
$$
Then it is easy to see that the operators $PAP$ and $QAQ$ are also 
form compact with respect to $M$ and therefore the form sums $M+PAP$, $M+QAQ$, $M+PAP+QAQ$ 
are well defined. Moreover, one has
\begin{equation}
\Xi(\lambda; M+PAP+QAQ,M)
=
\Xi(\lambda; M+PAP,M)+\Xi(\lambda; M+QAQ,M)
\label{b10}
\end{equation}
for all $\lambda\in\R\setminus\sigma_{ess}(M)$.
This follows directly from the fact that both $M$ and $PAP+QAQ$ are 
reduced by the orthogonal decomposition $\HH=\Ran P\oplus \Ran Q$.

Next, we apply a trick from \cite[Lemma~1.1]{IwaTam}  
to the analysis of $\Xi$. 
This trick is not specific to the function $\Xi$ but rather is a general variational 
consideration. The usefulness of this trick is illustrated by the 
construction of Section~\ref{sec.e}. 
\begin{theorem}\label{th.b4}
Let $M$ be a lower semibounded self-adjoint operator in $\HH$ and let $A$ be 
a self-adjoint operator which is form-compact with 
respect to $M$. Let $P$ and $Q$ be orthogonal
projections in $\HH$ such that $P+Q=I$. 
Assume that $M$ is reduced
by the orthogonal decomposition $\HH=\Ran P\oplus \Ran Q$.
Then for any $\lambda\in\R\setminus\sigma_{ess}(M)$ and for any $\e>0$, one has
\begin{align}
\Xi(\lambda; M+A,M)
&\leq 
\Xi(\lambda; M+P(A+\e \abs{A})P,M)+\Xi(\lambda; M+Q(A+\frac1\e \abs{A})Q,M),
\label{c12}
\\
\Xi(\lambda; M+A,M)
&\geq 
\Xi(\lambda; M+P(A-\e \abs{A})P,M)+\Xi(\lambda; M+Q(A-\frac1\e \abs{A})Q,M).
\label{c13}
\end{align}
\end{theorem}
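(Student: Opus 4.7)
The plan is to reduce \eqref{c12} and \eqref{c13} to the combination of three tools already at our disposal: the form inequality $A \leq P(A+\e|A|)P + Q(A+\tfrac{1}{\e}|A|)Q$, the monotonicity principle in Corollary~\ref{cr.b2}, and the orthogonal sum identity \eqref{b10}.

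First I would establish the following form-theoretic inequality: for every $\psi$ in the form domain of $A$ and every $\e>0$,
\begin{equation*}
\langle A\psi,\psi\rangle
\leq
\langle P(A+\e|A|)P\psi,\psi\rangle + \langle Q(A+\tfrac{1}{\e}|A|)Q\psi,\psi\rangle,
\end{equation*}
with the reverse inequality when $\e|A|$ and $\tfrac{1}{\e}|A|$ are replaced by their negatives. To prove it, set $\psi_1=P\psi$, $\psi_2=Q\psi$ and expand
\begin{equation*}
\langle A\psi,\psi\rangle = \langle A\psi_1,\psi_1\rangle + \langle A\psi_2,\psi_2\rangle + 2\Re\langle A\psi_1,\psi_2\rangle.
\end{equation*}
Using the polar decomposition $A=U|A|$ with $\|U\|\leq 1$, one has $|\langle A\psi_1,\psi_2\rangle| \leq \||A|^{1/2}\psi_1\|\,\||A|^{1/2}\psi_2\|$, and Young's inequality $2ab\leq \e a^2 + \tfrac{1}{\e}b^2$ yields
\begin{equation*}
2|\langle A\psi_1,\psi_2\rangle| \leq \e\langle|A|\psi_1,\psi_1\rangle + \tfrac{1}{\e}\langle|A|\psi_2,\psi_2\rangle,
\end{equation*}
from which both inequalities follow after rewriting in terms of $P$ and $Q$.

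Next, the form inequality translates into the operator inequality
\begin{equation*}
M + A \leq M + P(A+\e|A|)P + Q(A+\tfrac{1}{\e}|A|)Q
\end{equation*}
in the quadratic form sense. Since $A$ is form-compact with respect to $M$, so is $|A|$, and hence so are $P(A+\e|A|)P$ and $Q(A+\tfrac{1}{\e}|A|)Q$ (they are the compressions of form-compact operators by bounded projections that preserve the relevant form domain because $M$ is reduced by $\Ran P\oplus\Ran Q$). Thus all form sums involved are well defined and meet the hypotheses of Corollary~\ref{cr.b2}. Applying that corollary produces
\begin{equation*}
\Xi(\lambda;M+A,M)\leq \Xi(\lambda;M+P(A+\e|A|)P+Q(A+\tfrac{1}{\e}|A|)Q,M).
\end{equation*}

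Finally, since $M$ is reduced by $\HH=\Ran P\oplus\Ran Q$ and $P(A+\e|A|)P$ and $Q(A+\tfrac{1}{\e}|A|)Q$ act on complementary reducing subspaces, the orthogonal sum identity \eqref{b10} splits the right-hand side into the sum of the two terms in \eqref{c12}. The lower bound \eqref{c13} is obtained in exactly the same way from the reverse form inequality. The main technical point to watch is the justification of the polar-decomposition Cauchy--Schwarz step in the form sense rather than for bounded operators, but this is standard once one works with $|A|^{1/2}$ on $\dom(|A|^{1/2})=\dom(m)$.
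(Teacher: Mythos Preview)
Your proposal is correct and follows essentially the same route as the paper: establish the form inequality $A\leq P(A+\e|A|)P+Q(A+\tfrac1\e|A|)Q$ via the Cauchy--Schwarz/Young argument with $|A|^{1/2}$ and $\sign(A)$, then apply Corollary~\ref{cr.b2} and the orthogonal-sum identity \eqref{b10}. The paper presents the Cauchy--Schwarz step directly in terms of the sesquilinear forms $a[\psi,\phi]=(\sign(A)|A|^{1/2}\psi,|A|^{1/2}\phi)$ and $|a|[\psi,\phi]=(|A|^{1/2}\psi,|A|^{1/2}\phi)$, which is exactly the form-sense version you flag at the end.
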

\begin{proof}
For $\phi,\psi\in\Dom(\abs{A}^{1/2})$, denote
$$
a[\psi,\phi]=(\sign(A)\abs{A}^{1/2}\psi,\abs{A}^{1/2}\phi),
\quad 
\abs{a}[\psi,\phi]=(\abs{A}^{1/2}\psi,\abs{A}^{1/2}\phi).
$$
Let $\psi\in\Dom(\abs{M}^{1/2})$; by assumption, we have
$\psi\in\Dom(\abs{A}^{1/2})$ and 
$P\psi,Q\psi\in\Dom(\abs{M}^{1/2})\subset\Dom(\abs{A}^{1/2})$,
and 
\begin{multline*}
2\aabs{ a[P\psi,Q\psi]}
=
2\aabs{ (\sign(A)\abs{A}^{1/2}P\psi,\abs{A}^{1/2}Q\psi)}
\\
\leq
2\norm{\abs{A}^{1/2}P\psi}\norm{\abs{A}^{1/2}Q\psi}
\leq
\e\abs{a}[P\psi,P\psi]+\frac1\e\abs{a}[Q\psi,Q\psi].
\end{multline*}
It follows that 
\begin{multline*}
a[\psi,\psi]
=
a[P\psi,P\psi]+a[Q\psi,Q\psi]+2\Re a[P\psi,Q\psi]
\\
\leq 
a[P\psi,P\psi]+a[Q\psi,Q\psi]
+\e\abs{a}[P\psi,P\psi]+\frac1\e\abs{a}[Q\psi,Q\psi],
\end{multline*}
and therefore
$$
M+A
\leq 
M+P(A+\e\abs{A})P+Q(A+\frac1\e \abs{A})Q
$$
in the quadratic form sense.
Denote $K=P(A+\e\abs{A})P+Q(A+\frac1\e \abs{A})Q$; using 
Corollary~\ref{cr.b2} and the identity \eqref{b10}, we get
$$
\Xi(\lambda; M+A,M)
\leq 
\Xi(\lambda; M+K,M)
=
\Xi(\lambda; M+PKP,M)+\Xi(\lambda; M+QKQ,M),
$$
which yields  \eqref{c12}. The estimate \eqref{c13} is obtained in a similar way. 
\end{proof}

\section{Generalised Birman-Schwinger principle}\label{sec.g}

\subsection{Statement and discussion}
Let $M$ be a lower semibounded self-adjoint operator in $\HH$, let $A$ be 
a self-adjoint operator which is form-compact with respect to $M$, and let $\wt M$ be defined
as a form sum $\wt M=M+A$. Suppose that $A$ is represented
as $A=G^*JG$, where $G$ is a closed operator from $\HH$ to an auxiliary Hilbert 
space $\K$ such that  for some $\gamma>0$,  $\Dom (M+\gamma I)^{1/2}\subset \Dom G$ and 
$G(M+\gamma I)^{-1/2}$ is compact, and 
$J$ is self-adjoint, bounded in $\K$ and has a bounded inverse. 
(The simplest case of such a factorisation is $\K=\HH$, $A=\abs{A}^{1/2} \sign(A) \abs{A}^{1/2}$.) 
For $\lambda\in\R\setminus\sigma(M)$, define the compact self-adjoint operator 
$T(\lambda)$ in $\K$ by setting
$$
T(\lambda)f=G(M-\lambda I )^{-1}G^*f, \qquad f\in\Dom G^*,
$$
and taking closures.
The following result is essentially due to \cite[Theorem~5.5]{GM}, 
but it has many precursors in the literature, see the discussion below.
\begin{theorem} \label{thm.bs}
Under the above assumptions, for any $\lambda\in\R\setminus\sigma(M)$ one has 
\begin{equation}
\Xi(\lambda; \wt M,M)=\Xi(0; -J^{-1}-T(\lambda), -J^{-1}).
\label{b5}
\end{equation}
\end{theorem}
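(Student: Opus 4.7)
The plan is to interpret both sides of \eqref{b5} as spectral flows via the identification $\Xi = \sflow$ established in Section~\ref{sec.b3}, and then to match the crossings through the Birman-Schwinger correspondence. Introduce the two affine families
\begin{equation*}
M_\tau := M + \tau A, \qquad B_\tau := -J^{-1} - \tau T(\lambda), \qquad \tau \in [0,1].
\end{equation*}
For the left-hand side, $M$ is lower semibounded and $A$ is form-compact, so \eqref{a1} gives $\Xi(\lambda; \wt M, M) = \sflow(\lambda; M_1, M_0)$. For the right-hand side, $-J^{-1}$ is bounded self-adjoint, and $T(\lambda)$ is compact, because $G(M+\gamma)^{-1/2}$ is compact by hypothesis and $(M+\gamma)^{1/2}(M-\lambda)^{-1}(M+\gamma)^{1/2}$ is bounded on $\sigma(M)$; hence \eqref{a1} also yields $\Xi(0; -J^{-1}-T(\lambda), -J^{-1}) = \sflow(0; B_1, B_0)$, reducing the problem to showing these two spectral flows are equal.

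Next I would set up a Birman-Schwinger bijection between the kernels of the two families. Using $A = G^*JG$ and $\lambda \notin \sigma(M)$, one checks directly that $\psi \mapsto \phi := JG\psi$ is a linear bijection $\Ker(M_\tau - \lambda) \to \Ker B_\tau$ with inverse $\phi \mapsto -\tau(M-\lambda)^{-1}G^*\phi$: if $(M + \tau A - \lambda)\psi = 0$ then $\psi = -\tau(M-\lambda)^{-1}G^*(JG\psi)$, and applying $JG$ gives $\phi + \tau J T(\lambda)\phi = 0$, i.e.\ $B_\tau\phi = 0$; the reverse implication follows by reading the same computation backwards. Consequently the crossing times $\{\tau_0 \in (0,1]: \lambda \in \sigma(M_{\tau_0})\}$ and $\{\tau_0 \in (0,1]: 0 \in \sigma(B_{\tau_0})\}$ coincide, with matching kernel dimensions.

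Third, I would verify that the signed contributions at each crossing match. By first-order analytic perturbation theory, the contribution of a transverse crossing at $\tau_0$ to the LHS spectral flow is the signature of the Hermitian form $(\psi, \tilde\psi) \mapsto \langle A\psi, \tilde\psi\rangle$ on $V_0 := \Ker(M_{\tau_0} - \lambda)$, while that on the RHS is the signature of $(\phi, \tilde\phi) \mapsto \langle -T(\lambda)\phi, \tilde\phi\rangle$ on $W_0 := \Ker B_{\tau_0}$. Reading $G\tilde\psi = -\tau_0 T(\lambda)\tilde\phi$ off from $\tilde\psi = -\tau_0(M-\lambda)^{-1}G^*\tilde\phi$, one computes
\begin{equation*}
\langle A\psi, \tilde\psi\rangle = \langle JG\psi, G\tilde\psi\rangle = -\tau_0\langle T(\lambda)\phi, \tilde\phi\rangle,
\end{equation*}
so under the bijection $V_0 \to W_0$ the form $A|_{V_0}$ pulls back to $-\tau_0 T(\lambda)|_{W_0}$, which has the same signature as $-T(\lambda)|_{W_0}$ because $\tau_0 > 0$. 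Summing the matching signed contributions over all crossings yields \eqref{b5}.

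The main technical obstacle is handling degenerate (non-transverse) crossings, where $A|_{V_0}$ is singular and higher-order derivatives of the eigenvalue branches would have to be examined to determine the true direction of each crossing. A clean way around this is to invoke the stability Theorem~\ref{stab}: an arbitrarily small generic perturbation of $A$ breaks all such degeneracies, reducing the problem to the transverse case already handled, and stability then lets one pass to the limit without changing either side of \eqref{b5}.
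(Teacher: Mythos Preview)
Your approach is genuinely different from the paper's. The paper does not use the spectral-flow interpretation at all; instead it works directly with the $\Xi$-function via an algebraic ``dilation'' trick. In the bounded case it writes down an explicit operator
\[
X=\begin{pmatrix} I & -G^*J\\ G M^{-1} & I\end{pmatrix}
\quad\text{on }\HH\oplus\K,
\]
checks the identity
\[
X\begin{pmatrix} M & 0\\ 0 & J^{-1}\end{pmatrix}X^*
=\begin{pmatrix}\wt M & 0\\ 0 & J^{-1}+T(\lambda)\end{pmatrix},
\]
observes that $X-I$ is compact, and then invokes Lemma~\ref{lma.g1} (invariance of $\Xi$ under conjugation by such $X$) to conclude. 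Unbounded $M$ is handled by a cutoff $P_n=E_M(-n,n)$ and the stability Theorem~\ref{stab}; the case $\lambda\in\sigma(\wt M)$ is handled by left-continuity in $\lambda$. Your route, by contrast, is dynamical: you track analytic eigenvalue branches of the two affine families and match their crossing forms through the Birman--Schwinger bijection. This gives a nice conceptual explanation of \emph{why} the identity holds (each eigenvalue of $M_\tau$ that meets $\lambda$ corresponds to an eigenvalue of $B_\tau$ meeting $0$, with matched direction), whereas the paper's argument is shorter and more self-contained, needing only Lemma~\ref{lma.g1} and no analytic perturbation theory.

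There is one real gap in your write-up. Your proposed fix for degenerate crossings --- ``an arbitrarily small generic perturbation of $A$ breaks all such degeneracies'' and then invoke Theorem~\ref{stab} --- does not close as stated. A generic compact perturbation of $A$ will not be of the form $G^*JG$, so after perturbing you lose the factorisation and hence the Birman--Schwinger link to the right-hand side; you can stabilise $\Xi(\lambda;M+A_\e,M)$, but you have nothing to compare it to. You need a perturbation that moves both sides coherently: either perturb $G$ (so that $A_\e=G_\e^*JG_\e$ and $T_\e(\lambda)=G_\e(M-\lambda)^{-1}G_\e^*$ simultaneously), and argue genericity for that restricted class; or, more simply, perturb $\lambda$ to a nearby $\lambda'$ at which all crossings are transverse (such $\lambda'$ are dense by real-analyticity of the branches), prove \eqref{b5} there, and then pass to the limit using left-continuity of both sides in $\lambda$ --- which is exactly the mechanism the paper uses in the last step of its own proof. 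Note also that Theorem~\ref{stab} requires $\lambda\notin\sigma(M)\cup\sigma(\wt M)$, so if $\lambda$ happens to be an eigenvalue of $\wt M$ you cannot apply stability at $\lambda$ directly and must go through a left-limit argument in any case.
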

\begin{remark*}
\begin{enumerate}
\item
The importance of this formula in this context is that the variational estimates for $\Xi$  
from Section~\ref{sec.b}
can now be applied to the 
r.h.s. of \eqref{b5}.
\item
If $(J^{-1}+T(\lambda))$ is invertible, then the identity \eqref{b5} can be rewritten as
\begin{equation}
\Xi(\lambda; \wt M, M)=-\Xi(0; J^{-1}+T(\lambda), J^{-1}).
\label{b28}
\end{equation}
\item
Suppose $\lambda<\inf\sigma(M)$. Then the identity \eqref{b5} 
can be rewritten as the usual Birman-Schwinger principle:
\begin{equation}
N((-\infty,\lambda); \wt M)
=
N((-\infty,-1); X^*JX), 
\quad X=G(M-\lambda I)^{-1/2}\in\frakS_\infty,
\label{b22}
\end{equation}
see \cite{Birman2,Schwinger}.
Indeed, by \eqref{b21} the l.h.s. of \eqref{b22} coincides with minus the l.h.s. of \eqref{b5}.
In order to see that the r.h.s. of \eqref{b22} coincides with minus the r.h.s. of \eqref{b5}, 
let us apply the identity \eqref{b5} to the r.h.s. of itself with $\lambda=0$, 
$M=-J^{-1}$, $A=-T(\lambda)$, $J=-I$, and $G=X^*$:
\begin{multline*}
\Xi(0; (-J^{-1})+X(-I)X^*,(-J^{-1}))
=
\Xi(0; I-X^*(-J^{-1})^{-1}X,I)
\\
=
-N((-\infty,-1); X^*JX).
\end{multline*}
\item
If $J=I$ or $J=-I$, then \eqref{b5} can be rewritten in the following simpler form: 
\begin{align}
\Xi(\lambda; M+G^*G,M)&=N((-\infty,-1]; T(\lambda)),
\label{b23}
\\
\Xi(\lambda; M-G^*G,M)&=-N((1,\infty); T(\lambda)).
\label{b24}
\end{align}
These identities are essentially due to \cite[Theorem~3.5]{Sobolev},
where they were stated in the framework of the spectral shift function theory 
(cf. \eqref{b17a}).
In particular, if $\lambda<\inf\sigma(M)$, then \eqref{b24} becomes
\begin{equation}
N((-\infty,\lambda); M-G^*G)=N((1,\infty); T(\lambda)).
\label{b24a}
\end{equation}
This is perhaps the simplest and the best known case of the Birman-Schwinger
principle. 
\item
If $G$ is a Hilbert-Schmidt class operator (and so $A$ is trace class), 
then \eqref{b5} reduces to a representation for the spectral shift function from \cite{GM}. 
\item
For a discussion of the Birman-Schwinger principle in the context 
of the operator algebras, see \cite{KMS} and references therein.
\end{enumerate}
\end{remark*}

It is not difficult to prove Theorem~\ref{thm.bs} by using the above 
mentioned result from \cite{GM}  and an approximation argument. 
However, \cite{GM} uses some very non-trivial constructions 
from the spectral shift function theory. For this reason, below we give 
an alternative, perhaps more direct proof.

The following corollary is not used in this paper but might be useful 
elsewhere.
\begin{corollary}
Suppose that under the assumptions of Theorem~\ref{thm.bs}, one has
$[\lambda_1,\lambda_2]\subset\R\setminus\sigma_{ess}(M)$
and $\lambda_1,\lambda_2\in\R\setminus\sigma(M)$. Then 
$$
N([\lambda_1,\lambda_2);\wt M)
-
N([\lambda_1,\lambda_2);M)
=
\Xi(0; -J^{-1}-T(\lambda_1),-J^{-1}-T(\lambda_2)).
$$
\end{corollary}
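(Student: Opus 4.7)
The proof is essentially a bookkeeping exercise that combines three identities already established in the paper: the relation~\eqref{b20} between $\Xi$ and the eigenvalue counting function, the generalised Birman--Schwinger identity~\eqref{b5} from Theorem~\ref{thm.bs}, and the chain rule~\eqref{a8}.

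First I would use Theorem~\ref{th.b5}. Since $[\lambda_1,\lambda_2]\subset\R\setminus\sigma_{ess}(M)$ and $A$ is form-compact with respect to $M$, identity \eqref{b20} gives
\begin{equation*}
N([\lambda_1,\lambda_2);\wt M)-N([\lambda_1,\lambda_2);M)
=\Xi(\lambda_1;\wt M,M)-\Xi(\lambda_2;\wt M,M).
\end{equation*}
Next, since $\lambda_1,\lambda_2\in\R\setminus\sigma(M)$, Theorem~\ref{thm.bs} applies at each of these values to give
\begin{equation*}
\Xi(\lambda_i;\wt M,M)=\Xi(0;-J^{-1}-T(\lambda_i),-J^{-1}),\qquad i=1,2.
\end{equation*}

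The remaining step is to combine these two $\Xi$'s into a single one via the chain rule. Here the base operator is $M_0:=-J^{-1}$, which is bounded self-adjoint with a bounded inverse, so in particular $0\notin\sigma(M_0)$, hence $0\notin\sigma_{ess}(M_0)$. Writing
\begin{equation*}
-J^{-1}-T(\lambda_1)=M_0+A_1+A_2,\qquad A_1:=-T(\lambda_2),\qquad A_2:=T(\lambda_2)-T(\lambda_1),
\end{equation*}
both $A_1$ and $A_2$ are compact (as differences and negatives of the compact operators $T(\lambda_i)$), hence certainly form-compact with respect to the bounded operator $M_0$. Applying the chain rule \eqref{a8} with these data at $\lambda=0$ yields
\begin{equation*}
\Xi(0;-J^{-1}-T(\lambda_1),-J^{-1})
=\Xi(0;-J^{-1}-T(\lambda_2),-J^{-1})
+\Xi(0;-J^{-1}-T(\lambda_1),-J^{-1}-T(\lambda_2)).
\end{equation*}

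Subtracting $\Xi(0;-J^{-1}-T(\lambda_2),-J^{-1})$ from both sides and chaining together the three displayed equations gives the claim. There is no real obstacle here; the only point that requires a moment's care is checking that the chain rule \eqref{a8} is legitimately applicable at the level $\lambda=0$ with base operator $-J^{-1}$, and this is immediate from the standing invertibility assumption on $J$ together with the compactness of $T(\lambda_1),T(\lambda_2)$ built into the setup of Theorem~\ref{thm.bs}.
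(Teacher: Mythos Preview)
Your proof is correct and follows exactly the same route as the paper's own argument: apply \eqref{b20}, then Theorem~\ref{thm.bs} at $\lambda_1$ and $\lambda_2$, then the chain rule~\eqref{a8} at $\lambda=0$ with base operator $-J^{-1}$. The paper's proof is terser, but the logic is identical; your added verification that the chain rule applies (boundedness of $-J^{-1}$, compactness of the $T(\lambda_i)$) is a fair expansion of what the paper leaves implicit.
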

\begin{proof}
By \eqref{b20},  Theorem~\ref{thm.bs}, and the ``chain rule''
\eqref{a8}, one has
\begin{multline*}
N([\lambda_1,\lambda_2);\wt M)
-
N([\lambda_1,\lambda_2);M)
=
\Xi(\lambda_1; \wt M,M)-\Xi(\lambda_2; \wt M,M)
\\
=
\Xi(0; -J^{-1}-T(\lambda_1),-J^{-1})
-
\Xi(0; -J^{-1}-T(\lambda_2),-J^{-1})
\\
=
\Xi(0; -J^{-1}-T(\lambda_1),-J^{-1}-T(\lambda_2)),
\end{multline*}
as required.
\end{proof}

\subsection{Two lemmas}
Here we prove two lemmas which are used in our proof of 
Theorem~\ref{thm.bs}; they might also be of an independent
interest. 
\begin{lemma}\label{lma.g1}
Let $M=M^*$ be a bounded operator which has a bounded inverse. 
Let $X$ be a bounded operator which has a bounded inverse and 
suppose that $X-I$ is compact. Then 
$\Xi(0; XMX^*,M)$ exists and equals zero.
\end{lemma}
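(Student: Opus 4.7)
The plan is to reduce the claim to the stability theorem (Theorem~\ref{stab}) by constructing a norm-continuous homotopy from $I$ to $X$ through invertible operators of the form identity plus compact; along the corresponding path $t\mapsto X_t M X_t^*$, the quantity $\Xi(0;\cdot,M)$ is integer-valued and locally constant, hence constant, and the value at the starting point $t=0$ is trivially zero.

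First I would establish the existence of $\Xi(0;XMX^*,M)$. The identity $XMX^*-M = (X-I)MX^* + M(X^*-I)$ shows that this difference is compact (since $X-I\in\frakS_\infty$ and $M,X^*$ are bounded), while both $M$ and $XMX^*$ are invertible, so $0$ avoids both spectra. A Riesz-integral argument, exactly as recalled in Section~\ref{sec.b1a}, then yields that $E_M(-\infty,0) - E_{XMX^*}(-\infty,0)$ is compact, making the pair Fredholm and $\Xi(0;XMX^*,M)$ well defined.

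Next, I would build the homotopy using the polar decomposition $X=U|X|$. Compactness of $X^*X-I$, combined with the identity $|X|-I=(|X|+I)^{-1}(X^*X-I)$, gives that $|X|-I$ is compact; then $U-I = [(X-I)+(I-|X|)]|X|^{-1}$ is compact as well. I would concatenate two segments: on $[0,1/2]$ set $X_t = (1-2t)I+2t|X|$ (a positive invertible convex combination with $X_t-I=2t(|X|-I)$ compact), and on $[1/2,1]$ set $X_t = e^{i(2t-1)A}|X|$, where $A$ is a bounded self-adjoint logarithm of $U$ produced by functional calculus, so $e^{iA}=U$. Because $U-I$ is compact, $\sigma(U)\setminus\{1\}$ consists of isolated eigenvalues of finite multiplicity accumulating only at $1$, and $A$ can be chosen so that its spectrum accumulates only at $0$; this makes $A$ compact and therefore $e^{isA}-I$ compact for every $s$. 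Setting $M_t = X_t M X_t^*$ then produces a norm-continuous path of bounded, self-adjoint, invertible operators, each differing from $M$ by a compact, and the crude bound $\|M_t\|\le\|X_t\|^2\|M\|\le C$ gives the uniform lower bound $M_t\ge -CI$ needed to invoke Theorem~\ref{stab}.

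Finally, for any $t_0\in[0,1]$ and any sequence $t_n\to t_0$, the convergence $M_{t_n}\to M_{t_0}$ in norm (in particular in the norm-resolvent sense) together with the uniform lower bound lets Theorem~\ref{stab} (applied with $M_n\equiv M$, $\wt M_n = M_{t_n}$) conclude $\Xi(0;M_{t_n},M)=\Xi(0;M_{t_0},M)$ for all large $n$; thus $t\mapsto\Xi(0;M_t,M)$ is locally constant on $[0,1]$, and being integer-valued it is constant. At $t=0$ the value $\Xi(0;M,M)=0$ is immediate from the definition, which delivers the lemma. I expect the main obstacle to be the homotopy construction, specifically the extraction of a compact self-adjoint $A$ with $e^{iA}=U$: this requires the branch-of-argument choice in the functional calculus to be made carefully (in particular at a possible eigenvalue at $-1$) so that the resulting $A$ indeed has spectrum accumulating only at $0$ and hence remains compact.
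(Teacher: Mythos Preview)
Your argument is correct and follows the same overall strategy as the paper: build a norm-continuous path $t\mapsto X_t$ of invertible operators with $X_0=I$, $X_1=X$, and $X_t-I$ compact, then invoke the stability theorem (Theorem~\ref{stab}) to conclude that $\Xi(0;X_tMX_t^*,M)$ is constant and hence zero. The only difference lies in the construction of the homotopy. You go through the polar decomposition $X=U\abs{X}$, first interpolating linearly from $I$ to $\abs{X}$ and then rotating via a compact self-adjoint logarithm of $U$. The paper takes a shorter route: it simply sets $X_t=I+f(t)(X-I)$ for a continuous scalar path $f:[0,1]\to\C$ with $f(0)=0$, $f(1)=1$, chosen to avoid the discrete set $\{-1/\mu:\mu\in\sigma(X-I)\setminus\{0\}\}$ (which accumulates only at infinity), so that every $X_t$ is invertible. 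This bypasses the polar decomposition and the logarithm extraction altogether. Your construction is heavier but entirely valid; the concern you raise about a possible eigenvalue of $U$ at $-1$ is harmless, since such a point would be isolated in $\sigma(U)$ and one may simply assign it the argument $\pi$, keeping $A$ compact.
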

\begin{proof}
By assumptions, $XMX^*-M$ is compact; it follows that $\Xi(0; XMX^*,M)$ exists.

Next, 
since the operator $X-I$ is compact, 
one can find a continuous function $f:[0,1]\to\C$, $f(0)=0$, $f(1)=1$, 
such that $X_t=I+f(t)(X-I)$ is invertible for all $t\in[0,1]$. 
Then the family $X_t$, $t\in[0,1]$ is operator norm continuous and 
satisfies conditions $X_0=I$, $X_1=X$, $X_t-I$ is compact 
and $X_t$ is invertible for all $t$.

Let $M_t=X_tMX_t^*$. Then $M_t$ depends continuously on $t$ in the 
operator norm, $M_t$ is invertible for all $t$, and $M_t-M$ is compact 
for all $t$. By the stability Theorem~\ref{stab}, it follows that 
$\Xi(0;M_t,M)$ exists for all $t$, depends continuously on $t$ and 
therefore is constant. 
Finally, for $t=0$ we have $\Xi(0; M_0,M)=\Xi(0;M,M)=0$. 
\end{proof}

The following well known statement provides some insight into the identity \eqref{b5}.
\begin{lemma}\label{lma.b1}
Under the assumptions of Theorem~\ref{thm.bs}, one has
\begin{equation}
\dim\Ker (\wt M-\lambda I)
=
\dim\Ker(J^{-1}+T(\lambda)).
\label{b25a}
\end{equation}
\end{lemma}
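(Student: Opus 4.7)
The plan is to construct explicit mutually inverse linear maps
\[
\Phi\colon \Ker(\wt M - \lambda I) \to \Ker(J^{-1}+T(\lambda)), \qquad \Phi(\psi) = JG\psi,
\]
and
\[
\Psi\colon \Ker(J^{-1}+T(\lambda)) \to \Ker(\wt M - \lambda I), \qquad \Psi(f) = -(M-\lambda I)^{-1} G^* f.
\]
First I would dispose of a domain subtlety: since $\Dom G^*$ may be a proper subspace of $\K$, one must extend $(M-\lambda I)^{-1}G^*$ continuously to all of $\K$. Writing
\[
(M+\gamma I)^{1/2}(M-\lambda I)^{-1} G^* = \bigl[(M+\gamma I)(M-\lambda I)^{-1}\bigr]\bigl[(M+\gamma I)^{-1/2} G^*\bigr],
\]
the first bracket is bounded by the functional calculus for $M$ (since $\lambda \notin \sigma(M)$ and $M$ is lower semibounded), and the second is the adjoint of the compact operator $G(M+\gamma I)^{-1/2}$. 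Hence $(M-\lambda I)^{-1}G^*$ extends by continuity to a bounded operator from $\K$ into $\Dom(m^{1/2}) = \Dom(\wt m)$, and $\Psi(f)$ is well defined and lies in $\Dom(\wt m)$ for every $f \in \K$.

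For the forward direction, take $\psi\in\Ker(\wt M-\lambda I) \subset \Dom(\wt m) \subset \Dom G$. The eigenvalue equation, read at the form level, is
\[
m[\phi,\psi] + (JG\phi, G\psi) = \lambda(\phi,\psi) \quad\text{for all } \phi\in\Dom(\wt m).
\]
Substituting $\phi=(M-\lambda I)^{-1}h$ for $h\in\HH$ and shifting $(M-\lambda I)^{-1}$ and $G$ onto $h$ by adjointness, the equation becomes $(h,\,\psi + (M-\lambda I)^{-1}G^* JG\psi)=0$. Since $h$ is arbitrary, $\psi = -(M-\lambda I)^{-1}G^* JG\psi$; applying $G$ (legitimate by the factorization above) gives $G\psi + T(\lambda)(JG\psi)=0$, i.e. $(J^{-1}+T(\lambda))(JG\psi)=0$, so $\Phi$ maps into $\Ker(J^{-1}+T(\lambda))$.

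For the reverse direction, take $f\in\Ker(J^{-1}+T(\lambda))$ and set $\psi=\Psi(f)\in\Dom(m^{1/2})$. Then $G\psi=-T(\lambda)f=J^{-1}f$, and for $\phi\in\Dom(M)$ one computes $m[\phi,\psi]-\lambda(\phi,\psi) = ((M-\lambda I)\phi,\psi) = -(G\phi, f)$ by adjointness, while $(JG\phi,G\psi) = (JG\phi, J^{-1}f) = (G\phi, f)$ because $J=J^*$. Adding, $\wt m[\phi,\psi]=\lambda(\phi,\psi)$ on the form core $\Dom(M)$ (a core since $A$ is form-compact), hence on all of $\Dom(\wt m)$, giving $\psi\in\Dom(\wt M)$ and $\wt M\psi=\lambda\psi$. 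The compositions $\Phi\circ\Psi$ and $\Psi\circ\Phi$ equal the respective identities by direct substitution into the two relations just derived. The main technical obstacle throughout is the domain bookkeeping: $A$ is only form-compact, so $(M-\lambda I)\psi$ need not lie in $\HH$ and every manipulation must be carried out at the form level, and $G^*f$ has meaning only via the closure above rather than as a direct application of the operator $G^*$.
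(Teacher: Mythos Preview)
Your argument is correct and handles the form-level domain issues carefully; the identities $\Psi\circ\Phi=\mathrm{id}$ and $\Phi\circ\Psi=\mathrm{id}$ do follow from the two relations you derive, so you obtain an explicit linear isomorphism between the kernels. This is, however, a different route from the paper's. The paper first normalises to $\lambda=0$, then passes to bounded operators by the sandwich $\abs{M}^{-1/2}\wt M\abs{M}^{-1/2}=M_0+Y^*JY$ with $M_0=\sign(M)$ and $Y=G\abs{M}^{-1/2}\in\frakS_\infty$, and finally invokes the standard compact-operator fact $\dim\Ker(K_1K_2+I)=\dim\Ker(K_2K_1+I)$ to identify $\dim\Ker(M_0+Y^*JY)$ with $\dim\Ker(J^{-1}+T(0))$. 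That approach is shorter and sidesteps the form-domain bookkeeping entirely by moving at once to a bounded setting; your approach is more constructive, producing the actual Birman--Schwinger bijection rather than merely equality of dimensions, at the cost of tracking closures of $(M-\lambda I)^{-1}G^*$ and checking the form-core property of $\Dom M$. Both are standard ways to establish the kernel correspondence underlying the Birman--Schwinger principle.
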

\begin{proof}
Without the loss of generality, assume $\lambda=0$. 
Denote $M_0=\sign(M)$ and $Y=G\abs{M}^{-1/2}\in\frakS_\infty$. 
First note that since $\Ker M=\{0\}$, we have
\begin{equation}
\dim\Ker\wt M
=
\dim\Ker\abs{M}^{-1/2}\wt M \abs{M}^{-1/2}
=
\dim\Ker(M_0+Y^*JY).
\label{b26}
\end{equation}

Next, recall the well known fact that for any two compact operators
$K_1$, $K_2$ one has 
$\dim\Ker(K_1 K_2+I)=\dim\Ker(K_2 K_1+I)$.
It follows that 
\begin{equation}
\dim\Ker(Y^*JYM_0+I)=\dim\Ker(YM_0Y^*J+I).
\label{b27}
\end{equation}
The r.h.s. of \eqref{b27} equals 
$$
\dim\Ker((YM_0Y^*+J^{-1})J)=\dim\Ker(T(0)+J^{-1}).
$$
In the same way, the l.h.s. of \eqref{b27} equals 
$\dim\Ker(M_0+Y^*J Y)$. 
Together with \eqref{b26}, this proves the claim. 
\end{proof}

\subsection{The proof of Theorem~\ref{thm.bs}}
1. 
Without the loss of generality, let us assume $\lambda=0$.
First let us prove \eqref{b5} under the additional assumptions that 
$M$ is bounded and  $0\notin\sigma(\wt M)$.
If $M$ is bounded then our original assumption $G(M+\gamma I)^{-1/2}\in\frakS_\infty$ 
means simply that $G$ is compact. 

Consider the following bounded operator 
in $\HH\oplus\K$:
$$
X=
\begin{pmatrix}
I & -G^*J
\\
G M^{-1} & I
\end{pmatrix}.
$$
It is straightforward to see that $X-I$ is compact.
One can directly verify the identity
\begin{equation}
X
\begin{pmatrix}
M & 0 \\
0 & J^{-1} 
\end{pmatrix}
X^*
=
\begin{pmatrix}
\wt M  & 0 
\\
0 & J^{-1} + T(0)
\end{pmatrix}.
\label{b35}
\end{equation}
By our assumption $0\notin\sigma(\wt M)$ and Lemma~\ref{lma.b1}, 
the operator on the r.h.s. has a bounded inverse. 
From here and the compactness of $X-I$ it follows that 
$X$ has a bounded inverse. 

Next, 
from \eqref{b35}  and Lemma~\ref{lma.g1} it follows that 
$$
\Xi(0; \wt M,M)+
\Xi(0; J^{-1}+T(0),J^{-1})=0.
$$
This is the same as \eqref{b28}.

2. While still assuming that $0\notin\sigma(\wt M)$, 
let us lift the assumption of boundedness of $M$. 
Let $M$ be unbounded, and let $P_n=E_M(-n,n)$, 
$G_n=GP_n\in\frakS_\infty$, $A_n=G_n^*JG_n$, $T_n(z)=G_n(M-zI)^{-1}G_n^*$. 
In what follows, we will prove that 
\begin{gather}
\norm{T_n(z)-T(z)}\to 0\quad\text{as $n\to\infty$, $\forall z\notin\sigma(M)$,}
\label{b36}
\\
M+A_n\to\wt M\quad\text{as $n\to\infty$ in the norm resolvent sense.}
\label{b37}
\end{gather}
Since $\wt M$ and (by Lemma~\ref{lma.b1}) $J^{-1}+T(0)$ are invertible, 
from \eqref{b36}, \eqref{b37} it follows, in particular,  that $M+A_n$ 
and $J^{-1}+T_n(0)$ are invertible for all sufficiently large $n$. 

Next, note that the orthogonal decomposition $\HH=\Ran P_n\oplus\Ran(I-P_n)$ 
reduces both $M$ and $M+A_n$. The components of both $M$ and $M+A_n$ 
in $\Ran P_n$ are bounded. The components of $M$ and $M+A_n$ in 
$\Ran (I-P_n)$ coincide. 
By the first step of the proof, it follows that 
\begin{equation}
\Xi(0; M+A_n,M)=-\Xi(0; J^{-1}+T_n(0),J^{-1})
\label{b38}
\end{equation}
for all sufficiently large $n$. 
Now by \eqref{b36} and \eqref{b37} and the stability Theorem~\ref{stab}, 
we can pass to the limit as $n\to\infty$ in \eqref{b38}, which yields \eqref{b28}.
Note that it is easy to see that the uniform lower bound assumption from 
Theorem~\ref{stab} is satisfied in our case. 

3. Let us prove the convergence \eqref{b36} and \eqref{b37}.
First note that since 
$G(M+\gamma I)^{-1/2}$ is compact and $P_n\to I$ strongly as $n\to\infty$, 
we obtain
\begin{gather}
\norm{G_n(M+\gamma I)^{-1/2}-G(M+\gamma I)^{-1/2}}\to 0,
\quad n\to\infty,
\label{b39}
\\
\norm{G_n(M\pm i)^{-1}-G(M\pm i)^{-1}}\to 0,
\quad n\to\infty.
\label{b40}
\end{gather}
Writing 
\begin{gather*}
T(z)=[G(M+\gamma I)^{-1/2}](M+\gamma I)(M-zI)^{-1}[G(M+\gamma I)^{-1/2}]^*,
\\
T_n(z)=[G_n(M+\gamma I)^{-1/2}](M+\gamma I)(M-zI)^{-1} [G_n(M+\gamma I)^{-1/2}]^*
\end{gather*} 
and using \eqref{b39}, we obtain \eqref{b36}. 
In order to prove the convergence \eqref{b37}, we use the 
iterated resolvent identities in the form
\begin{equation}
(\wt M-i)^{-1}=(M-i)^{-1}-[G(M+i)^{-1}]^*(J^{-1}+T(i))^{-1}[G(M-i)^{-1}],
\label{g6}
\end{equation}
and similarly
\begin{equation}
(M+A_n-i)^{-1}=(M-i)^{-1}-[G_n(M+i)^{-1}]^*(J^{-1}+T_n(i))^{-1}[G_n(M-i)^{-1}].
\label{g7}
\end{equation}
Subtracting and using \eqref{b36} and \eqref{b40}, we obtain \eqref{b37}.

4. 
It remains to lift the assumption $0\notin\sigma(\wt M)$.
Suppose $0\in\R\setminus\sigma(M)$ and 
$0\in\sigma(\wt M)$; then $0$ is an isolated
eigenvalue of $\wt M$. 
It suffices to prove that both sides of \eqref{b5} are left continuous 
in $\lambda$ at $\lambda=0$.
For the l.h.s. of \eqref{b5}, this is true directly by the definition of $\Xi$. 
Let us consider the r.h.s. 

We claim that there exist $\varepsilon>0$ and $\delta>0$ such that 
\begin{equation}
\Xi(0; -J^{-1}-T(\lambda),-J^{-1})=
\Xi(-\delta; -J^{-1}-T(\lambda),-J^{-1}),
\qquad \forall \lambda\in[-\varepsilon,0].
\label{g8}
\end{equation}

Indeed, it is easy to see that $T(\lambda)$ is continuous in $\lambda$ 
in the operator norm at $\lambda=0$ and $T(\lambda)\leq T(0)$
for small $\lambda\leq0$.
Since $T(\lambda)$ is compact, it follows that for some $\delta>0$ 
and all sufficiently small $\lambda\leq 0$, one has 
$$
[-\delta,0)\cap\sigma(-J^{-1}-T(\lambda))=\varnothing
\quad
\text{ and }
\quad
[-\delta,0]\cap \sigma(-J^{-1})=\varnothing.
$$
Then \eqref{g8} follows.

Now by the stability Theorem~\ref{stab}, we have
$$
\Xi(-\delta; -J^{-1}-T(\lambda),-J^{-1})
\to
\Xi(-\delta; -J^{-1}-T(0),-J^{-1})
$$
as $\lambda\to0$. 
It follows that the r.h.s. of \eqref{b5}  is left continuous in 
$\lambda$ at $\lambda=0$, as required. 
\qed

\section{Safronov's theorem}\label{sec.d}
\subsection{The key estimates}
Here we state and prove a result (see \eqref{d11}, \eqref{d12} below) which is a slight generalisation of \cite{Safronov2}
(see also \cite{Safronov1}). 
Let $H_0$ be a lower semi-bounded self-adjoint operator in $\HH$; choose $\gamma\in\R$ such that 
$H_0+\gamma I \geq I$. Let $V_+\geq0$ and $V_-\geq0$  be self-adjoint operators in $\HH$ 
which are form-compact with respect to $H_0$.
Then for any $t\in\R$ the operators $H_0+tV_+-tV_-$ are well defined in terms of the 
corresponding quadratic forms.
Fix $\lambda\in\R\setminus\sigma(H_0)$; our aim is to consider
\begin{equation}
\Xi(\lambda; H_0+tV_+-tV_-,H_0)
\quad \text{ for $t\to\infty$.}
\label{d0}
\end{equation}
Let us define the auxiliary compact operators $T_{\alpha\beta}$,  $\alpha, \beta\in\{+,-\}$, by setting
$$
T_{\alpha \beta}f=\sqrt{V_\alpha}(H_0-\lambda I)^{-1}\sqrt{V_\beta}f, 
\quad f\in \Dom(H_0+\gamma I)^{1/2}
$$
and taking closures. Clearly, $T_{++}$ and $T_{--}$ are self-adjoint and $T_{+-}^*=T_{-+}$.
We note that, by a well known identity, 
\begin{equation}
N((a, \infty); T_{-+}T_{+-})=N((a, \infty); T_{+-}T_{-+})
\label{d0a}
\end{equation}
for any $a>0$. 
The key fact is the following 
\begin{theorem}\label{lma.d1}
For any $a\in (0,1)$, one has 
\begin{multline}
\Xi(\lambda; H_0+V_+-V_-, H_0)
\leq
\Xi(\lambda; H_0+\frac{1}{1-a}V_+,H_0)
\\
+\Xi(\lambda; H_0-\frac{1}{1+a}V_-,H_0)
+
N((a^2, \infty); T_{-+}T_{+-}),
\label{d9}
\end{multline}
\begin{multline}
\Xi(\lambda; H_0+V_+-V_-, H_0)
\geq
\Xi(\lambda; H_0+\frac{1}{1+a}V_+,H_0)
\\
+\Xi(\lambda; H_0-\frac{1}{1-a}V_-,H_0)
-
N((a^2, \infty); T_{-+}T_{+-}).
\label{d10}
\end{multline}
\end{theorem}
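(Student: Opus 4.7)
My plan is to combine the generalised Birman--Schwinger principle (Theorem~\ref{thm.bs}) with the off-diagonal perturbation estimate of Corollary~\ref{cr.b3}. First I would factorise $V_+-V_- = G^*JG$ on the auxiliary space $\K = \HH\oplus\HH$ by
\[
 G\psi = (\sqrt{V_+}\psi,\sqrt{V_-}\psi),\qquad J = \diag(1,-1),
\]
so that the form-compactness of $V_\pm$ with respect to $H_0$ passes to compactness of $G(H_0+\gamma I)^{-1/2}$. Theorem~\ref{thm.bs} then yields
\[
 \Xi(\lambda; H_0+V_+-V_-, H_0) = \Xi\bigl(0;\,-J^{-1}-T(\lambda),\,-J^{-1}\bigr),
\]
where $-J^{-1} = \diag(-1,1)$ has a spectral gap of width $2$ around $0$ and $T(\lambda)$ is the compact block matrix with entries $T_{\alpha\beta}$.

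Next I would split $T(\lambda) = D(\lambda) + \Omega(\lambda)$ into a diagonal block $D = \diag(T_{++}, T_{--})$ and an off-diagonal block $\Omega$, and apply Corollary~\ref{cr.b3} with $M = -J^{-1}$, $A = -D$, $B = -\Omega$, and any $a \in (0,1)$. For the upper bound this produces
\[
 \Xi\bigl(0;-J^{-1}-T(\lambda),-J^{-1}\bigr) \leq \Xi\bigl(-a;-J^{-1}-D(\lambda),-J^{-1}\bigr) + N\bigl((a,\infty);-\Omega(\lambda)\bigr),
\]
with the matching lower bound at spectral parameter $+a$ and a subtracted counting term. The computation $\Omega(\lambda)^2 = \diag(T_{+-}T_{-+},T_{-+}T_{+-})$, together with the standard fact that $T_{+-}T_{-+}$ and $T_{-+}T_{+-}$ have the same nonzero spectrum, shows that $\Omega(\lambda)$ has symmetric spectrum consisting of pairs $\pm\sqrt{\mu}$ for each nonzero eigenvalue $\mu$ of $T_{-+}T_{+-}$; both counting terms therefore equal $N((a^2,\infty); T_{-+}T_{+-})$, exactly matching the correction in \eqref{d9}--\eqref{d10}.

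Finally, the orthogonal decomposition $\K = \HH\oplus\HH$ reduces both $-J^{-1}$ and $D(\lambda)$, so identity \eqref{b10} splits
\[
 \Xi(\mp a;\,-J^{-1}-D(\lambda),\,-J^{-1}) = \Xi(\mp a;-I-T_{++},-I) + \Xi(\mp a;I-T_{--},I),
\]
and a uniform shift of the operators and the spectral parameter by $\pm a$ puts these in the form $\Xi(0;-(1\mp a)I-T_{++},-(1\mp a)I)$ and $\Xi(0;(1\pm a)I-T_{--},(1\pm a)I)$. A second application of Theorem~\ref{thm.bs}, now with the scalar factorisations $\tfrac{1}{1\mp a}V_+ = \sqrt{V_+}\cdot\tfrac{1}{1\mp a}\cdot\sqrt{V_+}$ and $-\tfrac{1}{1\pm a}V_- = \sqrt{V_-}\cdot(-\tfrac{1}{1\pm a})\cdot\sqrt{V_-}$, identifies these as $\Xi(\lambda; H_0+\tfrac{1}{1\mp a}V_+, H_0)$ and $\Xi(\lambda; H_0-\tfrac{1}{1\pm a}V_-, H_0)$, completing both \eqref{d9} and \eqref{d10}. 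The main point to watch is the sign bookkeeping in this last identification --- matching the $\mp a$ shifts in the bounded model problem with the $\tfrac{1}{1\mp a}$ and $\tfrac{1}{1\pm a}$ rescalings of $V_\pm$ --- but with that correspondence in hand, both inequalities follow mechanically from the machinery of Sections~\ref{sec.b} and \ref{sec.g}.
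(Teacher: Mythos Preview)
Your proposal is correct and follows essentially the same route as the paper: factorise $V_+-V_-=G^*JG$ on $\K=\HH\oplus\HH$, apply the generalised Birman--Schwinger principle to pass to $\Xi(0;-J^{-1}-\T,-J^{-1})$, use Corollary~\ref{cr.b3} to strip off the off-diagonal block, compute the counting term via the spectral symmetry of $\Omega=\T_{+-}+\T_{-+}$, and identify the diagonal pieces by a shift and a second (scalar) application of Theorem~\ref{thm.bs}. The only cosmetic difference is that the paper records the scalar Birman--Schwinger identities \eqref{d3}--\eqref{d4} up front and justifies the symmetry of $\sigma(\Omega)$ explicitly via $J\Omega J=-\Omega$, whereas you defer the scalar identification to the end and invoke the standard $\pm\sqrt{\mu}$ structure of an off-diagonal self-adjoint block matrix; both are equivalent.
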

\begin{proof}
1. Writing
$$
\frac{1}{1-a}V_+=\sqrt{V_+}(1-a)^{-1}\sqrt{V_+},
\qquad
-\frac{1}{1+a}V_-=\sqrt{V_-}(-1-a)^{-1}\sqrt{V_-},
$$
and using the generalised Birman-Schwinger principle \eqref{b5}, we get
\begin{align}
\Xi(\lambda; H_0+\frac{1}{1-a}V_+, H_0)
&=
\Xi(0; -I+aI-T_{++},-I+aI)
=\Xi(-a;-I-T_{++},-I),
\label{d3} 
\\
\Xi(\lambda; H_0-\frac{1}{1+a}V_-, H_0)
&=
\Xi(0; I+aI-T_{--},I+aI)
=\Xi(-a;I-T_{--},I).
\label{d4} 
\end{align}

2. Let $\K=\HH\oplus\HH$ and let $G:\HH\to\K$ be the closure of the operator defined by 
$$
Gf=(\sqrt{V_+}f,\sqrt{V_-}f),
\quad 
f\in\Dom(H_0+\gamma I)^{1/2}.
$$
Let $J:\K\to\K$ be the operator $(f_+,f_-)\mapsto(f_+,-f_-)$.
Then one has 
$$
\norm{\sqrt{V_+}f}_{\HH}^2-\norm{\sqrt{V_-}f}_{\HH}^2
=
(JGf,Gf)_{\K}, 
\quad f\in\Dom(H_0+\gamma I)^{1/2}, 
$$
and so 
$H_0+V_+-V_-=H_0+G^*JG$. 
Let us define a compact operator $\T$ in $\K$ by setting 
$$
\T f=G(H_0-\lambda I)^{-1}G^*f, \quad f\in\Dom(H_0+\gamma I)^{1/2}, 
$$
and taking closures.
An application of the generalised Birman-Schwinger principle  \eqref{b5}
yields
\begin{equation}
\Xi(\lambda; H_0+V_+-V_-, H_0)
=
\Xi(0; -J-\T , -J);
\label{d5a}
\end{equation}
note that here $J^{-1}=J$. 

3. 
Let $P_\pm:\K\to\K$ be the orthogonal projections, 
$$
P_+: (f_+,f_-)\mapsto(f_+,0),
\qquad
P_-: (f_+,f_-)\mapsto(0,f_-),
$$
and let 
$\T_{\alpha \beta}=P_\alpha \T P_\beta$, 
$\alpha, \beta\in\{-,+\}$.
Applying Corollary~\ref{cr.b3} followed by \eqref{b10}
and then using \eqref{d3}, \eqref{d4}, 
we get
\begin{multline}
\Xi(0; -J-\T, -J)
\leq 
\Xi(-a; -J-\T_{++}-\T_{--}, -J)
+N((a,\infty); -\T_{+-}-\T_{-+})
\\
=
\Xi(-a; -I-T_{++},-I)+\Xi(-a; I-T_{--},I)
+N((a,\infty); -\T_{+-}-\T_{-+})
\\
=
\Xi(\lambda; H_0+\frac{1}{1-a}V_+,H_0)
+\Xi(\lambda; H_0-\frac{1}{1+a}V_-,H_0)
+N((a,\infty); -\T_{+-}-\T_{-+}).
\label{d6}
\end{multline}

4. 
By a direct calculation,
$\T_{+-}+\T_{-+}=-J(\T_{+-}+\T_{-+})J$
and therefore
$$
N((a,\infty); \T_{+-}+\T_{-+})=N((a,\infty); -\T_{+-}-\T_{-+}).
$$
Using this fact and \eqref{d0a}, we get
\begin{multline}
2N((a,\infty);  \T_{+-}+\T_{-+})
=
N((a^2,\infty); (\T_{+-}+\T_{-+})^2)
\\
=
N((a^2,\infty); T_{-+}T_{+-})
+
N((a^2,\infty); T_{+-}T_{-+})
=
2N((a^2,\infty); T_{-+}T_{+-}).
\label{d6a}
\end{multline}
Combining \eqref{d5a}, \eqref{d6}, \eqref{d6a}, 
we obtain the upper bound \eqref{d9}.
The lower bound \eqref{d10} is proven in the same manner. 
\end{proof}

\subsection{Applications}
From Theorem~\ref{lma.d1} one easily obtains the main result of \cite{Safronov2}.
In  \cite{Safronov2}, the asymptotics \eqref{d0} was studied under the 
assumption that 
$$
\limsup_{t\to\infty}t^{-p}
N((1, \infty); t^2T_{-+}T_{+-})=0
$$
for some exponent $p>0$. 
Combining Theorem~\ref{lma.d1} with this assumption,  we obtain
\begin{multline*}
\limsup_{t\to\infty} t^{-p}\Xi(\lambda; H_0+tV_+-tV_-,H_0)
\leq 
\limsup_{t\to\infty} t^{-p}\Xi(\lambda; H_0+\frac{t}{1-a}V_+,H_0)
\\
+
\limsup_{t\to\infty} t^{-p}\Xi(\lambda; H_0-\frac{t}{1+a}V_-,H_0)
+
\limsup_{t\to\infty}t^{-p}N((1,\infty); \frac{t^2}{a^2}T_{-+}T_{+-})
\\
=
(1-a)^{-p}\limsup_{t\to\infty} t^{-p}\Xi(\lambda; H_0+tV_+,H_0)
+
(1+a)^{-p}\limsup_{t\to\infty} t^{-p}\Xi(\lambda; H_0-tV_-,H_0).
\end{multline*}
Letting $a\to0$, we obtain
\begin{multline}
\limsup_{t\to\infty} t^{-p}\Xi(\lambda; H_0+tV_+-tV_-,H_0)
\leq 
\limsup_{t\to\infty} t^{-p}\Xi(\lambda; H_0+tV_+,H_0)
\\
+
\limsup_{t\to\infty} t^{-p}\Xi(\lambda; H_0-tV_-,H_0),
\label{d11}
\end{multline}
and in the same way 
\begin{multline}
\liminf_{t\to\infty} t^{-p}\Xi(\lambda; H_0+tV_+-tV_-,H_0)
\geq 
\liminf_{t\to\infty} t^{-p}\Xi(\lambda; H_0+tV_+,H_0)
\\
+
\liminf_{t\to\infty} t^{-p}\Xi(\lambda; H_0-tV_-,H_0).
\label{d12}
\end{multline}
The estimates \eqref{d11}, \eqref{d12} were obtained in 
\cite{Safronov2} (see also \cite{Safronov1}) by a different method.
These estimates were then applied in  \cite{Safronov1,Safronov2} 
to various cases when $H_0$ is a differential operator and $V$ 
is the operator of multiplication by a function from an appropriate $L^q$ class.
Let us quote a typical application:
$$
\lim_{t\to\infty} t^{-d/2} \Xi(\lambda; -\Delta+V_0+tV,-\Delta+V_0)
=
-(2\pi)^{-d}\omega_d\int_{\{x: V(x)\leq 0\}} \abs{V(x)}^{d/2} dx,
$$
where $d\geq3$, $V\in L^{d/2}(\R^d)$, $V_0$ is a bounded potential in $\R^d$, 
$\lambda$ is in a gap of the spectrum of $-\Delta+V_0$ and $\omega_d$ 
is the volume of a unit ball in $\R^d$. 
Here the first terms in the r.h.s. of \eqref{d11}, \eqref{d12} vanish and the 
second terms coincide and are independent of $\lambda$. 
The analysis of the second terms uses the Birman-Schwinger principle 
in the form \eqref{b24} and the technique of \cite{Birman}.

We note that in \cite{Safronov1,Safronov2} the results are stated in terms of 
the spectral flow of the family $H_0+tV_+-tV_-$, see \eqref{a1}.
We also note that the arguments similar to the ones of the proof of 
Theorem~\ref{lma.d1} were used in \cite{Safronov3} in the analysis of the 
asymptotics of the spectral shift function.

\section{An abstract theorem}\label{sec.e}

Here we state and prove a theorem which will be used in the next section in application
to the study of the eigenvalues of the perturbed Landau Hamiltonian. 
This theorem (or the method of its proof) might also be useful in applications
to the  perturbed periodic operator.

\subsection{The statement of the Theorem}
Let $H_0$ be a lower semi-bounded self-adjoint operator in a Hilbert space $\HH$. 
Let $P_0,P_1,P_2,\dots$ be a sequence of orthogonal projections in $\HH$ such that 
$P_nP_m=0$ for all $n\not=m$ and $\sum_{n=0}^\infty P_n=I$. Assume also that
the orthogonal   decomposition $\HH=\oplus_{n=0}^\infty\Ran P_n$ reduces  
$H_0$  and 
\begin{equation}
\inf\sigma(H_0|_{\Ran P_n})\to\infty
\quad \text{as} \quad n\to\infty.
\label{e1}
\end{equation}
Let $V_t$, $t>0$ be a family of self-adjoint operators such that for all $t>0$, 
$V_t$ is form compact with respect to $H_0$.
Below we consider the asymptotics of 
\begin{equation}
\Xi(\lambda; H_0+V_t,H_0)
\quad \text{ as $t\to\infty$.}
\label{e1a}
\end{equation}
The perturbation $V_t$ can be regarded as a ``matrix''
$\{P_nV_tP_m\}$  with respect to the orthogonal sum decomposition 
$\HH=\oplus_{n=0}^\infty\Ran P_n$.
Under the appropriate assumptions, below we prove that, roughly speaking,
only the diagonal terms of this ``matrix'' contribute to the asymptotics \eqref{e1a}.

For some exponent $p>0$, we assume
\begin{gather}
\limsup_{t\to\infty} t^{-p}N((a,\infty); \pm( P_n\abs{V_t}P_m+P_m\abs{V_t}P_n) )=0
\quad \text{for any $a>0$ and $n\not=m$,}
\label{e2}
\\
\limsup_{t\to\infty} t^{-p}N((a,\infty); \pm( P_n V_t P_m+P_m V_t P_n) )=0
\quad \text{for any $a>0$ and $n\not=m$,}
\label{e2a}
\\
\lim_{E\to\infty}\limsup_{t\to\infty}t^{-p}N((-\infty,-E); H_0-s\abs{V_t})=0
\quad \text{for any $s>0$.}
\label{e3}
\end{gather}
\begin{theorem}\label{th.e1}
Let the above assumptions hold true. 
Then for any $\lambda\in\R\setminus\sigma(H_0)$,
any $\e>0$ and any  $a>0$ sufficiently small such that 
$[\lambda-a,\lambda+a]\cap \sigma(H_0)=\varnothing$, 
one has
\begin{align}
\limsup_{t\to\infty}t^{-p}\Xi(\lambda; H_0+V_t,H_0)
\leq
\sum_{n=0}^\infty
\limsup_{t\to\infty} t^{-p}\Xi(\lambda-a; H_0+P_n(V_t+\e \abs{V_t})P_n,H_0),
\label{e4}
\\
\liminf_{t\to\infty}t^{-p}\Xi(\lambda; H_0+V_t,H_0)
\geq
\sum_{n=0}^\infty
\liminf_{t\to\infty} t^{-p}\Xi(\lambda+a; H_0+P_n(V_t-\e \abs{V_t})P_n,H_0).
\label{e5}
\end{align}
\end{theorem}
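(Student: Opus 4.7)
The plan is a two-stage reduction. First use the diagonalisation trick (Theorem~\ref{th.b4}) with the complementary projections $\P_N:=\sum_{n<N}P_n$ and $\P_N^\perp:=I-\P_N$, both of which reduce $H_0$, to peel off the infinite tail on $\Ran\P_N^\perp$; then iterate Corollary~\ref{cr.b3} together with the orthogonal-sum identity \eqref{b10} to break the surviving finite block $\P_N(V_t\pm\e\abs{V_t})\P_N$ into its diagonal cells $P_n(V_t\pm\e\abs{V_t})P_n$. The arguments for \eqref{e4} and \eqref{e5} are symmetric; I work out the upper bound in detail.

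Applying \eqref{c12} with $P=\P_N$, $Q=\P_N^\perp$ gives
\begin{multline*}
\Xi(\lambda;H_0+V_t,H_0) \leq \Xi(\lambda;H_0+\P_N(V_t+\e\abs{V_t})\P_N,H_0)
\\
+ \Xi(\lambda;H_0+\P_N^\perp(V_t+\e^{-1}\abs{V_t})\P_N^\perp,H_0).
\end{multline*}
Since $V_t+\e^{-1}\abs{V_t}\leq(1+\e^{-1})\abs{V_t}$, Corollary~\ref{cr.b2} majorises the tail by $\Xi(\lambda;H_0+(1+\e^{-1})\P_N^\perp\abs{V_t}\P_N^\perp,H_0)$; the latter is reduced by $\P_N\oplus\P_N^\perp$ with trivial action on $\Ran\P_N$, and for $N$ so large that \eqref{e1} gives $\inf\sigma(H_0|_{\Ran\P_N^\perp})>\lambda$, formula \eqref{b21} expresses it as the negative count of eigenvalues below $\lambda$ of a non-negative perturbation of $H_0|_{\Ran\P_N^\perp}$, which vanishes. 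With $W:=V_t+\e\abs{V_t}$ and $K:=\binom{N}{2}$, iterate Corollary~\ref{cr.b3} $K$ times on the finite block, absorbing one off-diagonal pair $P_nWP_m+P_mWP_n$ per step with spectral-gap parameter $a/K$, to obtain
\begin{multline*}
\Xi(\lambda;H_0+\P_NW\P_N,H_0) \leq \Xi\bigl(\lambda-a;\, H_0 + \textstyle\sum_{n<N}P_nWP_n,\, H_0\bigr)
\\
+ \sum_{0\leq n<m<N} N\bigl((a/K,\infty);\, P_nWP_m+P_mWP_n\bigr).
\end{multline*}
The rank inequality $N((2\alpha,\infty);X+Y)\leq N((\alpha,\infty);X)+N((\alpha,\infty);Y)$ splits each off-diagonal term into a $V_t$-part and an $\abs{V_t}$-part, both $o(t^p)$ by \eqref{e2a} and \eqref{e2}. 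Iterating \eqref{b10} converts the diagonal sum into $\sum_{n<N}\Xi(\lambda-a;H_0+P_nWP_n,H_0)$; taking $\limsup_{t\to\infty}t^{-p}$ and then $N\to\infty$ yields \eqref{e4}.

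The lower bound \eqref{e5} proceeds identically using \eqref{c13} and the lower half of Corollary~\ref{cr.b3}, except that the tail contributes $\Xi(\lambda;H_0+\P_N^\perp(V_t-\e^{-1}\abs{V_t})\P_N^\perp,H_0)\geq -N((-\infty,\lambda);A_N)$, where $A_N$ denotes the compression of $H_0-(1+\e^{-1})\abs{V_t}$ to $\Ran\P_N^\perp$. Showing this is $o(t^p)$ as first $t\to\infty$ and then $N\to\infty$ is the main obstacle, since \eqref{e3} only controls eigenvalues of $H_0-s\abs{V_t}$ far below $0$ on the full space. The resolution is to exploit $H_0|_{\Ran\P_N^\perp}\geq C_N:=\inf\sigma(H_0|_{\Ran\P_N^\perp})$ by splitting $H_0=\tfrac12H_0+\tfrac12H_0$: any norm-one $x\in\Ran\P_N^\perp$ contributing to the count must satisfy $\langle(\tfrac12H_0-(1+\e^{-1})\abs{V_t})x,x\rangle<\lambda-C_N/2$, so
\[
N((-\infty,\lambda);A_N)\leq N((-\infty,2\lambda-C_N);H_0-2(1+\e^{-1})\abs{V_t})\leq N((-\infty,-E);H_0-2(1+\e^{-1})\abs{V_t})
\]
for any $E$ with $C_N\geq 2\abs{\lambda}+2E$; then \eqref{e3} finishes the argument.
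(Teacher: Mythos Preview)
Your proof is correct and follows essentially the same route as the paper's: both use Theorem~\ref{th.b4} with the splitting $P^{(r)}\oplus Q^{(r)}$, show the upper-bound tail is non-positive via \eqref{b21}, strip the off-diagonal cells using Corollary~\ref{cr.b3} and assumptions \eqref{e2}, \eqref{e2a}, and handle the lower-bound tail by the same ``halving $H_0$'' trick combined with \eqref{e3}. The only cosmetic difference is that the paper applies Corollary~\ref{cr.b3} once with $B=W_t^{(\mathrm{off})}$ and then invokes the iterated Weyl inequality $N((a,\infty);\sum_j K_j)\leq\sum_j N((a/\ell,\infty);K_j)$, whereas you iterate Corollary~\ref{cr.b3} itself $K$ times with gap parameter $a/K$; these are equivalent.
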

\begin{remark*}
\begin{enumerate}
\item
If $n$ is sufficiently large so that $\lambda-a<\inf\sigma(H_0 |_{\Ran P_n})$, 
then, using the orthogonal decomposition
$\HH=\Ran P_n\oplus \Ran(I-P_n)$ and \eqref{b21}, we easily obtain
\begin{equation}
\Xi(\lambda-a; H_0+P_n(V_t\pm\varepsilon \abs{V_t})P_n,H_0)
\leq0.
\label{e5a}
\end{equation}
Thus, all terms in the series \eqref{e4}, \eqref{e5} 
with sufficiently large $n$ are non-positive. 
\item
Assumption \eqref{e3} is used only in the proof of \eqref{e5}.
\end{enumerate}
\end{remark*}

\subsection{Proof of Theorem~\ref{th.e1}}
For $r\geq1$, denote $P^{(r)}=\sum_{n=0}^rP_n$, $Q^{(r)}=I-P^{(r)}$. 

\emph{Upper bound:} 
1. 
Using the ``diagonalisation trick" (Theorem~\ref{th.b4}), we obtain
\begin{multline}
\Xi(\lambda; H_0+V_t,H_0)
\\
\leq
\Xi(\lambda; H_0+P^{(r)}(V_t+\e \abs{V_t})P^{(r)},H_0)
+
\Xi(\lambda; H_0+Q^{(r)}(V_t+\frac1\e \abs{V_t})Q^{(r)},H_0)
\label{e8}
\end{multline}
for any $r$.
Next, as in \eqref{e5a}, 
we see that if 
$r$ is sufficiently large, then 
$$
\Xi(\lambda; H_0+Q^{(r)}(V_t+\frac1\e \abs{V_t})Q^{(r)},H_0)\leq 0.
$$
Thus, from \eqref{e8} we obtain the estimate
\begin{equation}
\Xi(\lambda; H_0+V_t,H_0)
\leq 
\Xi(\lambda; H_0+P^{(r)}(V_t+\e \abs{V_t})P^{(r)},H_0).
\label{e9}
\end{equation}

2. In what follows, we use the iterated Weyl's inequality 
(see e.g. \cite[Section~11.1]{BS})
for the eigenvalues of compact selfadjoint operators
$K_1$, $K_2$, \dots, $K_\ell$:
\begin{equation}
N((a,\infty); K_1+\dots+K_\ell)
\leq
\sum_{j=1}^\ell N((a/\ell,\infty); K_j).
\label{weyl}
\end{equation}
Let us  write $W_t=V_t+\e \abs{V_t}$ and 
$$
P^{(r)}W_tP^{(r)}=W_t^{(diag)}+W_t^{(off)},
$$
where 
$$
W_t^{(diag)}=\sum_{n\leq r} P_n W_t P_n,
\quad
W_t^{(off)}=
\sum_{\genfrac{}{}{0pt}{}{n\not=m} {n,m\leq r}}  P_n W_t P_m.
$$
By \eqref{weyl}, we get for any $a>0$: 
\begin{multline*}
N((a,\infty); W_t^{(off)})
\leq
\sum_{n<m\leq r}
N((\frac{a}{r(r-1)},\infty); P_nV_tP_m+P_mV_tP_n)
\\
+
\sum_{n<m\leq r}
N((\frac{a}{\varepsilon r(r-1)},\infty); P_n\abs{V_t}P_m+P_m\abs{V_t}P_n).
\end{multline*}
From here, using \eqref{e2}, \eqref{e2a}, we get 
\begin{equation}
\limsup_{t\to\infty}t^{-p}N((a,\infty); W_t^{(off)})=0.
\label{e9a}
\end{equation}

3. 
Using Corollary~\ref{cr.b3} and \eqref{b10}, for any sufficiently small $a>0$ we obtain
\begin{multline*}
\Xi(\lambda; H_0+P^{(r)}(V_t+\e \abs{V_t})P^{(r)},H_0)
\leq
\Xi(\lambda-a; H_0+W_t^{(diag)},H_0)
+
N((a,\infty),W_t^{(off)})
\\
=
\sum_{n\leq r}\Xi(\lambda-a; H_0+P_n W_t P_n,H_0)
+
N((a,\infty),W_t^{(off)}).
\end{multline*}
Using \eqref{e9a}, this yields
$$
\limsup_{t\to\infty} t^{-p}\Xi(\lambda; H_0+V_t,H_0)
\leq 
\sum_{n\leq r}\limsup_{t\to\infty} t^{-p}
\Xi(\lambda-a;H_0+P_n(V_t+\e \abs{V_t})P_n,H_0).
$$
Since $r$ can be chosen arbitrary large, we get the upper bound \eqref{e4}. 

\emph{Lower bound:}
As in \eqref{e8}, we get 
\begin{equation}
\Xi(\lambda; H_0+V_t,H_0)
\geq
\Xi(\lambda; H_0+P^{(r)}(V_t-\e \abs{V_t})P^{(r)},H_0)
+
\Xi(\lambda; H_0+Q^{(r)}(V_t-\frac1\e \abs{V_t})Q^{(r)},H_0).
\label{e10}
\end{equation}
Consider the two terms in the r.h.s. of \eqref{e10}. 
For the first term, as in the proof of the upper bound, we get
\begin{multline}
\liminf_{t\to\infty} t^{-p}\Xi(\lambda; H_0+P^{(r)}(V_t-\e \abs{V_t})P^{(r)},H_0)
\\
\geq 
\sum_{n\leq r}
\liminf_{t\to\infty} t^{-p}\Xi(\lambda+a; H_0+P_n(V_t-\e \abs{V_t})P_n,H_0).
\label{e11}
\end{multline}
Consider the second term.
Denote 
$H_0^{(r)}=H_0|_{\Ran Q^{(r)}}$, 
$V_t^{(r)}= Q^{(r)}V_t Q^{(r)}|_{\Ran  Q^{(r)}}$, 
$\abs{V_t}^{(r)}= Q^{(r)} \abs{V_t} Q^{(r)}|_{\Ran  Q^{(r)}}$,
$\Lambda_r=\inf\sigma(H_0^{(r)})$. 
By the assumption \eqref{e1}, we have $\Lambda_r\to\infty$ as $r\to\infty$. 
If $r$ is sufficiently large so that $\lambda<\Lambda_r$, then (similarly to \eqref{e5a}),
$$
\Xi(\lambda; H_0+ Q^{(r)}(V_t-\frac1\varepsilon\abs{V_t}) Q^{(r)}, H_0)
=
-N((-\infty,\lambda); H_0^{(r)}+V_t^{(r)}-\frac1\varepsilon\abs{V_t}^{(r)}).
$$
Next, by variational considerations, we have
\begin{multline*}
N((-\infty,\lambda); H_0^{(r)}+V_t^{(r)}-\frac1\varepsilon\abs{V_t}^{(r)})
\leq 
N((-\infty,\lambda); H_0^{(r)}-\frac{1+\varepsilon}{\varepsilon}\abs{V_t}^{(r)})
\\
=
N((-\infty,2\lambda); 2H_0^{(r)}-2\frac{1+\varepsilon}{\varepsilon}\abs{V_t}^{(r)})
\leq
N((-\infty,2\lambda); H_0^{(r)}+\Lambda_r-2\frac{1+\varepsilon}{\varepsilon}\abs{V_t}^{(r)})
\\
\leq
N((-\infty,2\lambda-\Lambda_r); H_0^{(r)}-2\frac{1+\varepsilon}{\varepsilon}\abs{V_t}^{(r)}).
\end{multline*}
From here, by assumption \eqref{e3}, we obtain
\begin{equation}
\lim_{r\to\infty}\limsup_{t\to\infty} t^{-p} 
\abs{\Xi(\lambda; H_0+Q^{(r)}(V_t-\frac1\e \abs{V_t})Q^{(r)},H_0)}=0.
\label{e12}
\end{equation}
Combining \eqref{e10}, \eqref{e11}, and \eqref{e12}, we obtain the lower bound \eqref{e5}.
\qed

\section{A theorem of Rozenblum and Sobolev}\label{sec.c}

\subsection{Introduction}
In $L^2(\R^2,dx_1dx_2)$, consider the Landau operator
$$
H_0=
\biggl(-i\frac{\partial}{\partial x_1}+\frac12 Bx_1\biggr)^2
+
\biggl(-i\frac{\partial}{\partial x_2}-\frac12 Bx_2\biggr)^2,
\quad 
B>0.
$$
It is well known that the spectrum of $H_0$ consists of a sequence of infinitely degenerate
eigenvalues (Landau levels) $\Lambda_n=B(2n+1)$, $n=0,1,\dots$; 
we set $\Lambda_{-1}=-\infty$ for notational convenience. 
We denote by $P_n$ the orthogonal projection onto the eigenspace $\Ker(H_0-\Lambda_n I)$. 

Let $V\in L^1(\R^2)\cap L^2(\R^2)$ be a real valued function. 
It is easy to see that the operator of multiplication by $V$ is form-compact with respect to $H_0$.
For $\alpha>0$ and $\beta>0$, we consider the spectral asymptotics of the operator
$$
H_t=H_0+V_t, 
\quad 
V_t(x_1,x_2)=V(x_1 t^{-\alpha},x_2 t^{-\beta})
$$
as $t\to\infty$. We use the notation $p=\alpha+\beta$ and 
$$
A(a,V)=\frac{B}{2\pi}\meas\{ x\in\R^2\mid V(x)>a\}, 
\quad
A[a,V]=\frac{B}{2\pi}\meas\{ x\in\R^2\mid V(x)\geq a\},
$$
where $a>0$ and $\meas$ is the Lebesgue measure in $\R^2$.
Our main result in this section is 
\begin{theorem}\label{th1}
Under the above assumptions, for any  $q\geq-1$ and 
any  $\lambda\in(\Lambda_q,\Lambda_{q+1})$, 
one has
\begin{align}
\limsup_{t\to\infty} t^{-p}\Xi(\lambda; H_t,H_0)
&\leq
\sum_{0\leq n\leq q} A[\lambda-\Lambda_n,V]
-
\sum_{n\geq q+1} A(\Lambda_n-\lambda,-V),
\label{a2}
\\
\liminf_{t\to\infty} t^{-p}\Xi(\lambda; H_t,H_0)
&\geq
\sum_{0\leq n\leq q} A(\lambda-\Lambda_n,V)
-
\sum_{n\geq q+1} A[\Lambda_n-\lambda,-V].
\label{a3}
\end{align}
Further, for any interval 
$[\lambda_1,\lambda_2]\subset \R\setminus\sigma(H_0)$,
one has 
\begin{align}
\limsup_{t\to\infty} t^{-p}N([\lambda_1,\lambda_2]; H_t)
&\leq
\frac{B}{2\pi}\sum_{n\geq 0} \meas\{x\in\R^2\mid \lambda_1-\Lambda_n\leq V(x)\leq  \lambda_2-\Lambda_n\},
\label{a5}
\\
\liminf_{t\to\infty} t^{-p}N((\lambda_1,\lambda_2); H_t)
&\geq
\frac{B}{2\pi}\sum_{n\geq 0} \meas\{x\in\R^2\mid \lambda_1-\Lambda_n< V(x)<  \lambda_2-\Lambda_n\}.
\label{a6}
\end{align}
\end{theorem}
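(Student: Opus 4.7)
The plan is to apply the abstract Theorem~\ref{th.e1} with the orthogonal decomposition $\HH = \bigoplus_{n\geq 0} \Ran P_n$ into Landau eigenspaces, reduce the remaining diagonal terms to a semiclassical asymptotic for Toeplitz-type operators on a single Landau level, and finally derive \eqref{a5}, \eqref{a6} from \eqref{a2}, \eqref{a3} via the identity \eqref{b20}.

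First I would verify the hypotheses of Theorem~\ref{th.e1}. Condition \eqref{e1} holds trivially since $\inf\sigma(H_0|_{\Ran P_n}) = \Lambda_n \to \infty$. The off-diagonal conditions \eqref{e2}, \eqref{e2a} require $t^{-p} N((a,\infty); \pm(P_n V_t P_m + P_m V_t P_n)) \to 0$ for $n \neq m$, which I would obtain from Weyl-type bounds $N((a,\infty);K) \leq a^{-s}\|K\|_{\frakS_s}^s$ combined with the explicit Landau kernels: after the rescaling $x_1 = t^{\alpha} u_1$, $x_2 = t^{\beta} u_2$, the relevant Schatten norm factorises as $t^p$ times an integral whose leading term vanishes for $n \neq m$ by orthogonality of distinct Landau projections in the semiclassical limit. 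Condition \eqref{e3} follows from the Birman--Schwinger principle \eqref{b24a}: $N((-\infty,-E); H_0 - s|V_t|) = N((1,\infty); s|V_t|^{1/2}(H_0+E)^{-1}|V_t|^{1/2})$, and the right-hand side is $o(t^p)$ as $E \to \infty$ after a Schatten norm estimate combined with the same rescaling. Once these hold, the operator $P_n W P_n$ with $W = V_t \pm \e|V_t|$ acts nontrivially only on $\Ran P_n$, where $H_0 = \Lambda_n I$, so the reduction \eqref{b10} together with a spectral shift gives
$$\Xi(\mu; H_0 + P_n W P_n, H_0) = \Xi(\mu - \Lambda_n; P_n W P_n|_{\Ran P_n}, 0),$$
which equals $N([\mu - \Lambda_n,\infty); P_n W P_n|_{\Ran P_n})$ for $n \leq q$ (where $\mu - \Lambda_n > 0$) and $-N((\Lambda_n - \mu,\infty); -P_n W P_n|_{\Ran P_n})$ for $n \geq q+1$ (where $\mu - \Lambda_n < 0$).

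The heart of the proof, and the main obstacle, is the Toeplitz-type semiclassical asymptotic on a single Landau level: for any $n \geq 0$ and $\mu > 0$,
$$\limsup_{t\to\infty} t^{-p}\, N([\mu,\infty); P_n V_t P_n|_{\Ran P_n}) \leq A[\mu,V], \qquad \liminf_{t\to\infty} t^{-p}\, N([\mu,\infty); P_n V_t P_n|_{\Ran P_n}) \geq A(\mu,V).$$
This is proved by coherent-state / Berezin--Lieb methods on the Landau level, exploiting the constant diagonal value $\frac{B}{2\pi}$ of $P_n$ and the exponential localisation of its kernel on the magnetic length scale, which is negligible compared with the support scale $t^{\min(\alpha,\beta)}$ of $V_t$. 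Feeding this asymptotic into \eqref{e4}, \eqref{e5} through the reductions above, and letting $\e \to 0$ and $a \to 0$ (using that $a \mapsto A(a,V)$ and $a \mapsto A[a,V]$ agree at all but countably many $a$ so the endpoint gaps close), yields \eqref{a2} and \eqref{a3}. The eigenvalue bounds \eqref{a5}, \eqref{a6} then follow from the $\Xi$-to-$N$ identity \eqref{b20}: since $[\lambda_1,\lambda_2] \cap \sigma(H_0) = \varnothing$, the count $N([\lambda_1,\lambda_2); H_t)$, and its open/closed variants after left-continuity adjustments of $\Xi$, equals $\Xi(\lambda_1; H_t,H_0) - \Xi(\lambda_2; H_t,H_0)$; applying \eqref{a2} and \eqref{a3} at each endpoint makes the level-set measures telescope into integrals over the strips $\{\lambda_1 - \Lambda_n \leq V(x) \leq \lambda_2 - \Lambda_n\}$, with the open/closed endpoint convention matching the direction of the inequality.
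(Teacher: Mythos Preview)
Your outline matches the paper's proof in structure: verify the hypotheses of Theorem~\ref{th.e1}, reduce each diagonal term to an eigenvalue count for the Toeplitz operator $P_nV_tP_n$ on a single Landau level, feed in the semiclassical asymptotic for those, let $a,\e\to0$, and recover \eqref{a5}--\eqref{a6} from \eqref{b20}.

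The only differences are in the technical tools for the two analytic inputs. The paper handles both the off-diagonal decay \eqref{e2}--\eqref{e2a} and the diagonal Toeplitz asymptotic through a single commutator estimate $\|P_nV_t-V_tP_n\|_{\frakS_2}^2=o(t^p)$ (Lemma~\ref{lma.f1}, proved directly from the kernel \eqref{f1}): the off-diagonal bound follows via $P_nV_tP_m=(P_nV_t-V_tP_n)P_m$, and the diagonal asymptotic via the Laptev--Safarov Szeg\H{o}-type inequality $\abs{\Tr\phi(P_nV_tP_n)-\Tr(P_n\phi(V_t))}\leq\tfrac12\|\phi''\|_\infty\|P_nV_t(I-P_n)\|_{\frakS_2}^2$ together with a smooth sandwich of the indicator. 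Your direct Schatten computation for the off-diagonal piece (which, if carried out, rests on Laguerre orthogonality $\int_0^\infty L_n(s)L_m(s)e^{-s}\,ds=\delta_{nm}$) and your Berezin--Lieb route for the diagonal piece are valid alternatives that do the same job. For \eqref{e3} the paper invokes the diamagnetic inequality to reduce to the free Laplacian, yielding the explicit bound $N((-\infty,-E);H_0-s|V_t|)\leq Cs^2t^pE^{-1}\|V\|_2^2$; you should name that ingredient rather than just ``a Schatten norm estimate''. Finally, the $a,\e\to0$ passage in the paper uses the one-sided continuity $A[\mu-0,W]=A[\mu,W]$, $A(\mu+0,W)=A(\mu,W)$ and $\lim_{\e\to0}A[\mu,V+\e|V|]=A[\mu,V]$ (properties \eqref{d1}--\eqref{d5}); your ``agree at all but countably many $a$'' argument is not quite enough when $\meas\{V=\lambda-\Lambda_n\}>0$, since you must land exactly on $A[\cdot]$ in the upper bound and $A(\cdot)$ in the lower.
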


In \cite{RS}, the asymptotic estimates \eqref{a5}, \eqref{a6} 
were proven for $\alpha=\beta=1$. Our construction is somewhat
more direct than the one of \cite{RS}. 
The operator theoretic component of our construction is Theorem~\ref{th.e1}. 
The other component is the analysis of the spectral asymptotics 
of the operators $P_nV_tP_m$ in Section~\ref{sec.c2}. 

\subsection{Spectral asymptotics of the operators $P_nV_tP_m$} \label{sec.c2}
Here we prove 
\begin{proposition}\label{prp.c2}
For any $V\in L^1(\R^2)\cap L^2(\R^2)$ and any $a>0$, one has
\begin{align}
&\limsup_{t\to\infty} t^{-p}N([a,\infty); P_n V_tP_n)\leq A[a,V],
\quad \forall n\geq0,
\label{c1}
\\
&\liminf_{t\to\infty} t^{-p} N((a,\infty); P_n V_t P_n)\geq A(a,V),
\quad \forall n\geq0,
\label{c2}
\\
&\limsup_{t\to\infty} t^{-p}N((a,\infty); P_m V_t P_n+P_n V_tP_m)=0,
\quad \forall n\not = m.
\label{c3}
\end{align}
\end{proposition}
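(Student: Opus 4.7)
The key observation is the diagonal relation $P_n(x,x) = B/(2\pi)$, which converts traces of $P_n$-sandwiched multipliers into Lebesgue integrals: $\Tr(P_n f(V_t) P_n) = (B/2\pi)\int f(V_t)\,dx = (Bt^p/2\pi)\int f(V)\,dy$ after rescaling. I plan to establish \eqref{c1}, \eqref{c2}, \eqref{c3} separately, reducing to $V \in C_c^\infty(\R^2)$ by $L^1 \cap L^2$ density where convenient. For the upper bound \eqref{c1}, I apply the Berezin--Lieb--Jensen inequality for the projection $P_n$: on an orthonormal eigenbasis $\{\psi_j\} \subset \Ran P_n$ of $P_n V_t P_n$ with eigenvalues $\lambda_j = \langle V_t \psi_j, \psi_j\rangle$, Jensen's inequality for convex nonnegative $g$ gives $\sum_j g(\lambda_j) \leq \Tr(P_n g(V_t) P_n) = (Bt^p/2\pi)\int g(V)\,dy$. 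Choosing $g_\epsilon$ piecewise linear with $g_\epsilon = 0$ on $(-\infty, a-\epsilon]$ and $g_\epsilon = 1$ on $[a, \infty)$ yields $N([a, \infty); P_n V_t P_n) \leq (Bt^p/2\pi)\meas\{V \geq a - \epsilon\}$; passing $t \to \infty$ and then $\epsilon \to 0^+$ delivers $A[a,V]$.

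For the off-diagonal estimate \eqref{c3}, the identity $P_n V_t P_m = P_n[V_t, P_m]$ (from $P_n P_m = 0$ when $n \neq m$) yields $\|P_n V_t P_m\|_{\frakS_2}^2 \leq \iint (V_t(x) - V_t(y))^2 |P_m(x,y)|^2\,dx\,dy$. For $V \in C_c^\infty$, Taylor's theorem bounds $|V_t(x) - V_t(y)|^2$ by a constant times $|x_1 - y_1|^2 t^{-2\alpha} + |x_2 - y_2|^2 t^{-2\beta}$ on $\supp V_t$, while the Gaussian-times-Laguerre decay of $P_m(x,y)$ in $|x-y|$ makes the resulting $|x-y|^2$-weighted integral finite. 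Splitting into interior and boundary contributions gives $\|[V_t, P_m]\|_{\frakS_2}^2 = O(t^{p-2\min(\alpha,\beta)}) + O(t^{\max(\alpha,\beta)}) = o(t^p)$. The Chebyshev-type bound $N((a,\infty); K) \leq \|K\|_{\frakS_2}^2/a^2$ (applied via singular values to $K = P_n V_t P_m + P_m V_t P_n$) then gives \eqref{c3}. For general $V \in L^1 \cap L^2$, decompose $V = V' + V''$ with $V' \in C_c^\infty$ and $\|V''\|_{L^2}$ arbitrarily small, using the crude bound $\|P_n V''_t P_m\|_{\frakS_2}^2 \leq (B/2\pi) t^p \|V''\|_{L^2}^2$ for the remainder.

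For the lower bound \eqref{c2}, fix $\eta > 0$ and a bounded Borel $\Omega \subset \{V > a+\eta\}$, and consider $T_t := P_n \chi_{\Omega_t} P_n$ on $\Ran P_n$. Its eigenvalues $\mu_j \in [0,1]$ satisfy $\sum_j \mu_j = Bt^p\meas\Omega/(2\pi)$, and a direct computation gives the identity $\sum_j \mu_j(1-\mu_j) = \tfrac12\|[\chi_{\Omega_t}, P_n]\|_{\frakS_2}^2$. After mollifying $\chi_\Omega$ by smooth cutoffs, the same commutator analysis as in \eqref{c3} yields the semiclassical clustering $\sum_j \mu_j(1-\mu_j) = o(t^p)$, whence $\#\{j : \mu_j > 1-\delta\} \geq Bt^p\meas\Omega/(2\pi) - o(t^p)$ for any $\delta > 0$. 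For any unit vector $\phi$ in the corresponding spectral subspace $L$ of $T_t$, we have $\langle \chi_{\Omega_t}\phi, \phi\rangle > 1-\delta$, hence $\langle V_t \phi, \phi\rangle \geq (a+\eta)(1-\delta) - \|V\|_\infty\delta > a$ for $\delta$ sufficiently small. The min-max principle then gives $N((a,\infty); P_n V_t P_n) \geq \dim L$; taking the supremum over bounded $\Omega \subset \{V > a + \eta\}$ and then $\eta \to 0^+$ (using $\meas\{V > a+\eta\} \nearrow \meas\{V > a\}$) yields $A(a,V)$. The hardest step is the clustering estimate for general bounded Borel $\Omega$: it reduces to $\|[\chi_{\Omega_t}, P_n]\|_{\frakS_2}^2 = o(t^p)$, which requires an $L^1$-approximation of $\chi_\Omega$ by smooth cutoffs together with the kernel estimates already developed for \eqref{c3}.
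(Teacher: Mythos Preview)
Your argument for \eqref{c1} has a genuine gap. The Jensen/Berezin--Lieb inequality $\sum_j g(\lambda_j)\leq \Tr(P_n g(V_t)P_n)$ that you invoke requires $g$ to be \emph{convex}, but the piecewise-linear $g_\epsilon$ you propose (zero on $(-\infty,a-\epsilon]$, one on $[a,\infty)$, linear in between) is \emph{not} convex: its slope jumps from $1/\epsilon$ down to $0$ at $s=a$. Worse, this cannot be repaired by a different choice of test function: any convex $g$ with $g\geq\chi_{[a,\infty)}$ and $g(0)=0$ must satisfy $g(s)\geq s/a$ for $s\geq a$ (the chord from $(0,0)$ to $(a,1)$ has slope $1/a$, and convexity forces at least this slope beyond $a$), so $\int g(V)\,dy\geq a^{-1}\int_{\{V>a\}}V$, which does not converge to $\meas\{V\geq a\}$ as you vary $g$. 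Thus the one-sided Jensen bound is too crude to reach the sharp coefficient $A[a,V]$.

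The paper handles \eqref{c1} and \eqref{c2} simultaneously by a two-sided trace estimate due to Laptev--Safarov: for $\phi\in W^2_\infty(\R)$ with $\phi(0)=0$,
\[
\bigl|\Tr\phi(P_nV_tP_n)-\Tr(P_n\phi(V_t)P_n)\bigr|\leq \tfrac12\norm{\phi''}_{L^\infty}\norm{P_nV_t(I-P_n)}_{\frakS_2}^2=o(t^p),
\]
the $o(t^p)$ coming from the same commutator estimate $\norm{[V_t,P_n]}_{\frakS_2}^2=o(t^p)$ that underlies your \eqref{c3}. Sandwiching $\chi_{[a,\infty)}$ between smooth (not convex) $\phi$ then gives both bounds. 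Your treatment of \eqref{c3} is essentially the paper's, though the paper proves the commutator estimate directly for $V\in L^1\cap L^2$ (using $L^2$-continuity of translations) rather than via $C_c^\infty$ density. Your clustering argument for \eqref{c2} is correct when $V\in L^\infty$ (needed for the step $\langle V_t\phi,\phi\rangle\geq(a+\eta)(1-\delta)-\norm{V}_\infty\delta$); extending to general $V\in L^1\cap L^2$ requires a Weyl-inequality reduction to the truncation $V\chi_{\{|V|\leq M\}}$, which you should spell out, since level sets are not continuous under $L^1\cap L^2$ approximation.
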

The proof of this Proposition follows an unpublished remark by 
A.~Laptev and Yu.~Safarov and their earlier work \cite{LS}.
This Proposition was also proved in \cite{RS} by a different method.

The analysis below uses the well known explicit formula for the integral kernel of $P_n$:
\begin{equation}
P_n(x,y)=\frac{B}{2\pi} L_n(\tfrac{B\abs{x-y}^2}{2}) \exp(-\frac{B}4(\abs{x-y}^2+2i[x,y]),
\label{f1}
\end{equation}
where $L_n$ is the Laguerre polynomial and $[x,y]=x_1y_2-x_2y_1$. 
\begin{lemma}\label{lma.f1}
For any $V\in L^1(\R^2)\cap L^2(\R^2)$ and any $n=0,1,2,\dots$, one has 
$$
\norm{P_n V_t-V_t P_n}_{\frakS_2}^2=o(t^p), 
\quad t\to\infty.
$$
\end{lemma}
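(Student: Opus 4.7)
The plan is to express the Hilbert-Schmidt norm squared of the commutator as a double integral against $|P_n(x,y)|^2$, exploit the translation invariance of this kernel modulus, rescale, and conclude via $L^2$-continuity of translations and dominated convergence.

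First, since $V_t\in L^2$ and $P_n$ has a bounded kernel, the commutator $[P_n,V_t]$ has integral kernel $P_n(x,y)(V_t(y)-V_t(x))$, so
$$
\norm{P_nV_t-V_tP_n}_{\frakS_2}^2
=\int_{\R^2}\!\int_{\R^2}|P_n(x,y)|^2\,|V_t(x)-V_t(y)|^2\,dx\,dy.
$$
Using the explicit formula \eqref{f1}, the quantity $K(z):=|P_n(x,x-z)|^2=\tfrac{B^2}{4\pi^2}L_n(\tfrac{B|z|^2}{2})^2 e^{-\tfrac{B}{2}|z|^2}$ depends only on $z=x-y$ and belongs to $L^1(\R^2)$; in fact $\int_{\R^2}K(z)\,dz=P_n(0,0)=\tfrac{B}{2\pi}$. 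Setting $z=x-y$, the above becomes
$$
\norm{P_nV_t-V_tP_n}_{\frakS_2}^2
=\int_{\R^2}K(z)\left(\int_{\R^2}|V_t(x)-V_t(x-z)|^2\,dx\right)dz.
$$

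Next, I would rescale $x=(t^\alpha u_1,t^\beta u_2)$ so that $V_t(x)=V(u)$ and $dx=t^p\,du$. Writing $h_t(z):=(z_1 t^{-\alpha},z_2 t^{-\beta})$, the inner integral equals
$$
t^p\int_{\R^2}|V(u)-V(u-h_t(z))|^2\,du.
$$
Hence
$$
t^{-p}\norm{P_nV_t-V_tP_n}_{\frakS_2}^2
=\int_{\R^2}K(z)\,\norm{V-\tau_{h_t(z)}V}_{L^2}^2\,dz,
$$
where $\tau_h$ denotes translation by $h$. For each fixed $z$, $h_t(z)\to 0$ as $t\to\infty$ (since $\alpha,\beta>0$), and strong continuity of translations in $L^2(\R^2)$ gives $\norm{V-\tau_{h_t(z)}V}_{L^2}^2\to 0$. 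Moreover, the integrand is dominated by $4\norm{V}_{L^2}^2 K(z)$, which is integrable.

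Dominated convergence then gives $t^{-p}\norm{P_nV_t-V_tP_n}_{\frakS_2}^2\to 0$, as required. The only slightly delicate point is confirming the integrability of $K$ and the uniform bound on the inner integral; both are immediate from the explicit Laguerre-Gaussian form of $P_n(x,y)$ and from $\norm{\tau_h V}_{L^2}=\norm{V}_{L^2}$, so no real obstacle arises.
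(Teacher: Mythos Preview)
Your proof is correct and follows essentially the same route as the paper: write the Hilbert--Schmidt norm as a double integral against $|P_n(x,y)|^2$, use that this modulus depends only on $x-y$, rescale, and invoke $L^2$-continuity of translations. The only cosmetic difference is that the paper rescales both variables (so the Gaussian kernel concentrates and an $\varepsilon$--$\delta$ splitting finishes the job), whereas you rescale only the inner integral (so the shift $h_t(z)\to0$ while $K(z)$ stays fixed) and close with dominated convergence; your variant is arguably a touch cleaner.
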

\begin{proof}
The integral kernel of $P_n V_t-V_t P_n$ is 
$P_n(x,y)(V_t(y)-V_t(x))$.
Using this fact, formula \eqref{f1}, and the obvious estimate
$$
\abs{L_n(s)} e^{-s/4}\leq C=C(n), \quad s\geq 0
$$
for the Laguerre polynomial, we obtain
\begin{multline}
\norm{P_n V_t-V_t P_n}_{\frakS_2}^2
\leq 
\left( \frac{B}{2\pi}\right)^2 C^2 \int_{\R^2}\int_{\R^2} dx dy \  \abs{V_t(x)-V_t(y)}^2 
\exp(-\tfrac{B}{4}\abs{x-y}^2)
\\
=
\left( \frac{B}{2\pi}\right)^2 C^2 t^{2p} \int_{\R^2}\int_{\R^2} dx dy \  \abs{V(x)-V(y)}^2 
\exp\bigl(-\tfrac{B}{4} t^{2\alpha}(x_1-y_1)^2-\tfrac{B}{4} t^{2\beta}(x_2-y_2)^2\bigr)
\\
=
t^{2p}\int_{\R^2} dz \  f(z) 
\exp\bigl(-\tfrac{B}{4} t^{2\alpha}z_1^2-\tfrac{B}{4} t^{2\beta}z_2^2\bigr),
\label{f3}
\end{multline}
where
$$
f(z)=\left( \frac{B}{2\pi}\right)^2 C^2
\int dx \  \abs{V(x)-V(x+z)}^2.
$$
It is easy to see that $f(z)$ is continuous in $z$,  $f(0)=0$, and
$$
f(z)
\leq 
4\left( \frac{B}{2\pi}\right)^2 C^2 \int_{\R^2} V(x)^2 dx.
$$
Given $\varepsilon>0$, let us choose $\delta>0$ such that $\abs{f(z)}\leq \varepsilon$
for $\abs{z}\leq \delta$. 
Then, splitting the integral in the r.h.s. of \eqref{f3} into the sum of the integrals 
over $\{z: \abs{z}\leq \delta\}$ and over $\{z: \abs{z}> \delta\}$, 
we readily obtain the estimate
$$
\norm{P_n V_t-V_t P_n}_{\frakS_2}^2
\leq 
C_1\varepsilon t^p + C_2 t^{2p} \int_{\abs{z}>\delta} 
\exp\bigl(-\tfrac{B}{4} t^{2\alpha}z_1^2-\tfrac{B}{4} t^{2\beta}z_2^2\bigr) dz.
$$
Since $\varepsilon>0$ is arbitrary and the integral in the r.h.s. tends to zero 
faster than any power of $t$ as $t\to\infty$, this yields the required estimate. 
\end{proof}
\begin{lemma}\label{lma.f2}
Let $\phi$ be a function from the Sobolev class $W_\infty^2(\R)$ 
(i.e. $\phi''\in L^\infty(\R)$) such that $\phi(0)=0$. 
Then 
$$
\Tr \phi(P_n V_t P_n)
=
\frac{B}{2\pi} t^p \int_{\R^2} \phi(V(x)) dx +o(t^p), 
\quad t\to\infty.
$$
\end{lemma}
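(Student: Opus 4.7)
I would split the trace as
$$
\Tr\phi(P_nV_tP_n) = \Tr P_n\phi(V_t)P_n + E_t,
$$
evaluate the first term exactly, and show the remainder $E_t$ is $o(t^p)$.

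For the main term, the kernel formula \eqref{f1} gives the constant diagonal $P_n(x,x)=B/(2\pi)$ (using $L_n(0)=1$). Since $\phi(0)=0$ and $\phi''\in L^\infty$, Taylor's theorem yields $|\phi(s)|\le|\phi'(0)||s|+\tfrac12\|\phi''\|_\infty s^2$, so $\phi(V_t)\in L^1(\R^2)$ by $V\in L^1\cap L^2$. Factoring $\phi(V_t)$ as a product of two Hilbert-Schmidt operators through $|\phi(V_t)|^{1/2}$ shows $P_n\phi(V_t)$ is trace class, and the change of variables $y=(x_1t^{-\alpha},x_2t^{-\beta})$ gives
$$
\Tr P_n\phi(V_t)P_n = \int_{\R^2} P_n(x,x)\phi(V_t(x))\,dx = \frac{B}{2\pi}t^p\int_{\R^2}\phi(V(y))\,dy.
$$

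For the remainder, the identity $\phi(0)=0$ implies $\phi(P_nV_tP_n)=P_n\phi(P_nV_tP_n)P_n$, so $E_t=\Tr P_n[\phi(P_nV_tP_n)-\phi(V_t)]P_n$. I would first assume $V\in L^\infty$. For the monomial $\phi(s)=s^k$ with $k\ge 2$, expanding each middle $P_n$ in $(P_nV_tP_n)^k$ as $I-(I-P_n)$ writes $P_nV_t^kP_n-(P_nV_tP_n)^k$ as a sum of terms of the form $\pm P_nV_t^{j_0}(I-P_n)V_t^{j_1}\cdots(I-P_n)V_t^{j_{|S|}}P_n$ with $\sum j_i=k$ and $j_i\ge 1$. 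The outermost factors $P_nV_t^{j_0}(I-P_n)=[P_n,V_t^{j_0}](I-P_n)$ and $(I-P_n)V_t^{j_{|S|}}P_n$ have $\frakS_2$-norm $o(t^{p/2})$ by \lma{f1} and the commutator bound $\|[P_n,V_t^j]\|_{\frakS_2}\le j\|V\|_\infty^{j-1}\|[P_n,V_t]\|_{\frakS_2}$, while the middle portion is bounded in operator norm by powers of $\|V\|_\infty$. Estimating the trace by the two outermost $\frakS_2$-norms times the operator norm of the middle yields $o(t^p)$ for each term, hence for the polynomial case. To extend to arbitrary $\phi\in W^2_\infty$ with $\phi(0)=0$, approximate $\phi$ on $[-\|V\|_\infty,\|V\|_\infty]$ by a polynomial $p$ with $p(0)=0$ and $\|\phi'-p'\|_\infty\le\e$ (e.g.\ by mollification); then $|(\phi-p)(s)|\le\e|s|$ on this interval, and the residual contributes $\e\Tr|P_nV_tP_n|=O(\e t^p)$ using $\Tr|P_nV_tP_n|\le\frac{B}{2\pi}t^p\|V\|_1$.

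Finally, to remove the $V\in L^\infty$ assumption, truncate $V=V_N+R_N$ with $V_N=V\mathbf{1}_{|V|\le N}$; the bound $|\phi(s)-\phi(s')|\le|\phi'(0)||s-s'|+\tfrac12\|\phi''\|_\infty(|s|+|s'|)|s-s'|$ together with $\|R_N\|_{L^1\cap L^2}\to 0$ controls the $R_N$ contribution uniformly in $t$ after division by $t^p$. The main difficulty is the triple approximation: the polynomial degree needed to resolve $\phi$ depends on $\|V_N\|_\infty=N$, the remainder bound in the polynomial case degrades with both the degree and $N$, and all these must be balanced against $t\to\infty$ so that the overall error is $o(t^p)$. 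A cleaner route that avoids this bookkeeping is the Helffer-Sj\"ostrand representation of $\phi$, which writes $E_t$ as an integral over resolvent differences $(z-V_t)^{-1}-(z-P_nV_tP_n)^{-1}$ and reduces the estimate directly to a single commutator bound from \lma{f1}.
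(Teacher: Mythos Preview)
Your decomposition into main term plus remainder, and your computation of the main term via the constant diagonal $P_n(x,x)=B/(2\pi)$, coincide with the paper's argument. The difference lies entirely in how the remainder $E_t=\Tr P_n[\phi(P_nV_tP_n)-\phi(V_t)]P_n$ is controlled.

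The paper dispatches $E_t$ in one line by quoting a Szeg\H{o}-type inequality of Laptev and Safarov \cite[Theorem~1.2]{LS}: for any orthogonal projection $P$, any self-adjoint $W$ with $PW\in\frakS_2$, and any $\phi\in W^2_\infty(\R)$ with $\phi(0)=0$,
\[
\bigl|\Tr P\phi(PWP)P-\Tr P\phi(W)P\bigr|\le\tfrac12\norm{\phi''}_{L^\infty}\norm{PW(I-P)}_{\frakS_2}^2.
\]
With $P=P_n$ and $W=V_t$, the right-hand side is $o(t^p)$ by Lemma~\ref{lma.f1}, and the proof is finished --- no $L^\infty$ assumption on $V$, no polynomial approximation, no truncation. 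Your polynomial route for bounded $V$ is correct and is, in effect, a hands-on rederivation of this inequality in a special case; the ``single commutator bound'' you are reaching for at the end is exactly this estimate.

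The weak point of your plan is the removal of the $L^\infty$ hypothesis. The pointwise bound $|\phi(s)-\phi(s')|\le|\phi'(0)||s-s'|+\tfrac12\norm{\phi''}_\infty(|s|+|s'|)|s-s'|$ controls the difference $\Tr P_n[\phi(V_t)-\phi((V_N)_t)]P_n$ of multiplication operators, but it does \emph{not} transfer to $\Tr[\phi(P_nV_tP_n)-\phi(P_n(V_N)_tP_n)]$, which compares functions of two distinct operators; an operator-Lipschitz argument is needed (for instance, differentiate along $s\mapsto B+s(A-B)$ and pair $\phi'(\cdot)\in\frakS_2$ against $A-B\in\frakS_2$, using $|\phi'(s)-\phi'(0)|\le\norm{\phi''}_\infty|s|$). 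The Helffer--Sj\"ostrand alternative runs into the same wall: for $V\notin L^\infty$ the multiplication operator $V_t$ is unbounded and $\phi$ may grow quadratically, so the almost-analytic extension lacks the decay needed for the resolvent integral to converge over the relevant spectra. Once either route is made rigorous you have essentially reproved the Laptev--Safarov bound, which is why the paper simply cites it.
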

\begin{proof}
By \cite[Theorem~1.2]{LS}, we have an estimate  
$$
\aabs{\Tr(P_n\phi(P_nV_tP_n)P_n)-\Tr(P_n\phi(V_t)P_n)}
\leq
\frac12\norm{\phi''}_{L^\infty(\R)}\norm{P_n V_t (I-P_n)}^2_{\frakS_2}.
$$
This estimate has a general operator theoretic nature and depends only on the 
facts that $P_n$ is an orthogonal projection, $V_t$ is self-adjoint and $P_n V_t$ is 
a Hilbert-Schmidt operator.
Using formula \eqref{f1} for the integral kernel of $P_n$ and the fact that $L_n(0)=1$, we get
$$
\Tr(P_n\phi(V_t)P_n)=\Tr(P_n\phi(V_t))= t^p \frac{B}{2\pi} \int_{\R^2} \phi(V(x))dx.
$$
Next, we have
$$
\norm{P_n V_t(I-P_n)}_{\frakS_2}^2
=
\norm{(P_n V_t-V_tP_n)(I-P_n)}_{\frakS_2}^2
\leq
\norm{(P_n V_t-V_tP_n)}_{\frakS_2}^2
=
o(t^p)
$$
as $t\to\infty$ by Lemma~\ref{lma.f1}. Finally, we note that $\phi(0)=0$ and so 
$P_n\phi(P_nV_tP_n)P_n=\phi(P_nV_tP_n)$. Putting this together yields 
the required asymptotics. 
\end{proof}

\begin{proof}[Proof of Proposition~\ref{prp.c2}]
1. Let us first prove \eqref{c3}. 
One has
\begin{multline*}
N((a,\infty); P_m V_t P_n+P_n V_t P_m)
=
N((1,\infty); \frac1a P_m V_t P_n+\frac1a P_n V_t P_m)
\\
\leq 
\frac1{a^2} \norm{P_m V_t P_n+P_n V_t P_m}_{\frakS_2}^2
\leq 
\frac1{a^2} (\norm{P_m V_t P_n}_{\frakS_2}+\norm{P_n V_t P_m}_{\frakS_2})^2
=
\frac{4}{a^2}\norm{P_n V_t P_m}_{\frakS_2}^2
\\
=\frac{4}{a^2}\norm{(P_n V_t -V_t P_n)P_m}_{\frakS_2}^2
\leq 
\frac{4}{a^2}\norm{P_n V_t -V_t P_n}_{\frakS_2}^2
=o(t^p),
\end{multline*}
as $t\to\infty$ by Lemma~\ref{lma.f1}.

2. Let us prove \eqref{c1} and \eqref{c2}.
Let $\phi,\psi\in W_\infty^2(\R)$ be such that $\phi(0)=\psi(0)=0$ 
and 
$$
0\leq \phi(s)\leq \chi_{(a,\infty)}(s)\leq \chi_{[a,\infty)}(s)\leq \psi(s),
\quad s\geq0.
$$
Then 
\begin{align}
N((a,\infty),P_nV_tP_n)&\geq \Tr\phi(P_nV_tP_n),
\label{f4}
\\
N([a,\infty),P_nV_tP_n)&\leq \Tr\psi(P_nV_tP_n),
\label{f5}
\end{align}
The asymptotics of the traces in the r.h.s. of \eqref{f4} and \eqref{f5} 
is given by Lemma~\ref{lma.f2}.
Choosing appropriate sequences of functions $\phi_n$ and $\psi_n$ 
which  converge to $\chi_{(a,\infty)}$ pointwise on $\R\setminus\{a\}$,  
we obtain the required result. 
\end{proof}

\subsection{Eigenvalues below the essential spectrum}
\begin{lemma}\label{lma.c1}
One has
\begin{equation}
\lim_{E\to\infty}\limsup_{t\to\infty}t^{-p}N((-\infty,-E); H_t)=0.
\label{c4}
\end{equation}
\end{lemma}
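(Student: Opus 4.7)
The plan is to reduce the problem to a Hilbert--Schmidt estimate for the Birman--Schwinger operator, where the diamagnetic inequality replaces the magnetic Green's function by a tractable convolution kernel.

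First, since $V_t \geq -\abs{V_t}$, the variational principle gives $H_t \geq H_0 - \abs{V_t}$, so
$$N((-\infty, -E); H_t) \leq N((-\infty, -E); H_0 - \abs{V_t}).$$
Because $-E < 0 < B = \inf\sigma(H_0)$, the Birman--Schwinger principle in the form \eqref{b24a} applies with $G = \abs{V_t}^{1/2}$ (the form-compactness of $\abs{V_t}$ with respect to $H_0$ ensures the required compactness):
$$N((-\infty, -E); H_0 - \abs{V_t}) = N((1, \infty); K_t), \qquad K_t = \abs{V_t}^{1/2}(H_0 + E)^{-1}\abs{V_t}^{1/2}.$$

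Second, I would estimate $N((1, \infty); K_t) \leq \norm{K_t}_{\frakS_2}^2$ by the Chebyshev-type bound, and then compute
$$\norm{K_t}_{\frakS_2}^2 = \iint_{\R^2 \times \R^2} \abs{V_t(x)}\,\abs{V_t(y)}\,\abs{G_E(x,y)}^2\, dx\, dy,$$
where $G_E$ is the integral kernel of $(H_0+E)^{-1}$. The key step is to invoke the diamagnetic inequality $\abs{e^{-sH_0}(x,y)} \leq e^{s\Delta}(x,y)$ for the Landau Hamiltonian, which integrated against $e^{-sE}\,ds$ yields
$$\abs{G_E(x,y)} \leq \frac{1}{2\pi} K_0(\sqrt{E}\abs{x-y}),$$
with $K_0$ the modified Bessel function of the second kind. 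Since the right-hand side depends only on $\abs{x-y}$, I would apply Young's convolution inequality ($L^2 \ast L^1 \hookrightarrow L^2$) to obtain
$$\iint\abs{V_t(x)}\abs{V_t(y)} K_0(\sqrt{E}\abs{x-y})^2\, dx\, dy \leq \norm{V_t}_{L^2}^2 \cdot \bigl\| K_0(\sqrt{E}\abs{\cdot})^2\bigr\|_{L^1(\R^2)}.$$
A change of variables gives $\bigl\| K_0(\sqrt{E}\abs{\cdot})^2\bigr\|_{L^1} = E^{-1}\norm{K_0}_{L^2(\R^2)}^2$, while the scaling of $V_t$ yields $\norm{V_t}_{L^2}^2 = t^p\norm{V}_{L^2}^2$. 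Combining all estimates produces
$$t^{-p}N((-\infty, -E); H_t) \leq \frac{C\norm{V}_{L^2}^2}{E},$$
which tends to zero as $E \to \infty$.

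The main obstacle is that the Landau resolvent $(H_0+E)^{-1}$ itself fails to belong to any Schatten class $\frakS_s$ with $s \leq 2$: since each Landau level $\Lambda_n$ has infinite multiplicity, the sum $\sum_n(\Lambda_n+E)^{-s}$ diverges for $s \leq 1$, and a direct spectral-decomposition computation of $\norm{K_t}_{\frakS_2}^2$ through the projections $P_n$ runs into the same divergence. The diamagnetic inequality circumvents this by reducing matters to the free Schr\"odinger resolvent, whose kernel is localised at scale $E^{-1/2}$ and lies in $L^2(\R^2)$, which is precisely what enables the $L^2 \ast L^1$ Young bound and the resulting $1/E$ decay.
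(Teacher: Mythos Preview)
Your argument is correct and follows essentially the same route as the paper: both reduce to $N((-\infty,-E);H_0-\abs{V_t})$, apply the Birman--Schwinger principle \eqref{b24a}, and bound the resulting counting function by a Hilbert--Schmidt norm controlled via the diamagnetic inequality, ending with the bound $Ct^p\norm{V}_{L^2}^2/E$. The only difference is cosmetic: the paper computes $\norm{\,\abs{V_t}(H_0+E)^{-1}}_{\frakS_2}^2$ in Fourier space via the standard identity $\norm{f(x)g(-i\nabla)}_{\frakS_2}^2=(2\pi)^{-2}\norm{f}_{L^2}^2\norm{g}_{L^2}^2$ (after using the operator inequality $\norm{L^{1/2}ML^{1/2}}_{\frakS_2}\leq\norm{LM}_{\frakS_2}$ to pass from $K_t$ to $\abs{V_t}(H_0+E)^{-1}$), whereas you work directly with the position-space kernel $K_0(\sqrt{E}\abs{\cdot})$ and Young's inequality.
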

\begin{proof}
By the diamagnetic inequality (see, e.g., \cite[Section~2]{AHS} and references therein), one 
has the following estimate involving the integral kernels of the resolvents
of $H_0$ and $-\Delta$: 
$$
\abs{(H_0+E)^{-1}[x,y]}\leq (-\Delta+E)^{-1}[x,y].
$$
It follows that the Hilbert-Schmidt norm of $V(H_0+E)^{-1}$ can be estimated
by the Hilbert-Schmidt norm of $V(-\Delta+E)^{-1}$: 
\begin{multline}
\norm{V(H_0+E)^{-1}}_{{\frakS_2}}^2
\leq
\norm{V(-\Delta+E)^{-1}}_{{\frakS_2}}^2
\\
=
\frac1{2\pi}\int_{\R^2}\abs{V(x)}^2dx \int_{\R^2} (p^2+E)^{-2}dp
=
\frac{C}{E}\int_{\R^2}\abs{V(x)}^2dx.
\label{d2a}
\end{multline}
Next, recall the following well known estimate. 
If $L\geq0$ and $M\geq0$ are bounded self-adjoint operators such that
$LM$ is Hilbert-Schmidt, then 
\begin{equation}
\norm{L^{1/2}ML^{1/2}}_{\frakS_2}^2
=
\Tr (L^{1/2}MLML^{1/2})=\Tr(LMLM)\leq \norm{LM}_{\frakS_2}^2.
\label{d2}
\end{equation}
Using the Birman-Schwinger principle in the form \eqref{b24a}
and the estimates \eqref{d2a}, \eqref{d2} (with $L=\abs{V_t}$, $M=(H_0+E)^{-1}$),  we get
\begin{multline*}
N((-\infty,-E),H_t)
\leq
N((-\infty,-E), H_0-\abs{V_t})
=
N((1,\infty); \abs{V_t}^{1/2}(H_0+E)^{-1}\abs{V_t}^{1/2})
\\
\leq 
\norm{\abs{V_t}^{1/2} (H_0+E)^{-1}\abs{V_t}^{1/2}}_{\frakS_2}^2
\leq 
\norm{\abs{V_t} (H_0+E)^{-1}}_{\frakS_2}^2
\\
=
\frac{C}{E}\int_{\R^2}\abs{V_t(x)}^2dx
=
t^p\frac{C}{E}\int_{\R^2}\abs{V(x)}^2dx,
\end{multline*}
which yields the required result.
\end{proof}
\subsection{Proof of Theorem~\ref{th1}}
1. 
We first note the following elementary continuity properties 
of the asymptotic coefficients $A(a,V)$, $A[a,V]$ for any $a>0$: 
\begin{align}
A(a+0,V)&=A(a,V),
\qquad
\qquad \  A[a-0,V]=A[a,V],
\label{d1}
\\
\lim_{\e\to+0}A(a; V-\e \abs{V})&=A(a; V),
\quad
\lim_{\e\to+0}A[a; V+\e \abs{V}]=A[a; V].
\label{d5}
\end{align}

2. 
In order to prove \eqref{a2}, \eqref{a3}, let us apply 
Theorem~\ref{th.e1}. 
Assumption \eqref{e1} is clearly fulfilled; \eqref{e2}, \eqref{e2a} hold true by \eqref{c3} and
\eqref{e3} holds true by Lemma~\ref{lma.c1}. Thus,  Theorem~\ref{th.e1} is applicable.

Let us prove the upper bound \eqref{a2}; the lower bound \eqref{a3} can be 
proven in the same way. 
Denote $W=V+\e\abs{V}$; by \eqref{e4} and \eqref{c1}, \eqref{c2}, we have for all sufficiently small $a>0$: 
\begin{multline*}
\limsup_{t\to\infty} t^{-p} \Xi(\lambda; H_t,H_0)
\leq 
\sum_{n=0}^\infty
\limsup_{t\to\infty} t^{-p} \Xi(\lambda-a; H_0+P_nW_tP_n,H_0)
\\
=
\sum_{n=0}^\infty
\limsup_{t\to\infty} t^{-p} \Xi(\lambda-a-\Lambda_n; P_nW_tP_n,0)
\\
=
\sum_{n=0}^q
\limsup_{t\to\infty} t^{-p} N([\lambda-a-\Lambda_n,\infty); P_nW_tP_n)
-
\sum_{n=q+1}^\infty
\liminf_{t\to\infty} t^{-p} N((\Lambda_n+a-\lambda,\infty); -P_nW_tP_n)
\\
\leq
\sum_{n=0}^q A[\lambda-a-\Lambda_n,W]
-
\sum_{n=q+1}^\infty A(\Lambda_n+a-\lambda, -W)
\\
\leq
\sum_{n=0}^q A[\lambda-a-\Lambda_n,V+\e \abs{V}]
-
\sum_{n=q+1}^N A(\Lambda_n+a-\lambda, -V-\e\abs{V})
\end{multline*}
for any $N\geq q+1$. 
Letting $a\to+0$, $\e\to+0$ and using the continuity properties \eqref{d1}, \eqref{d5}, 
we obtain
$$
\limsup_{t\to\infty} t^{-p} \Xi(\lambda; H_t,H_0)
\leq 
\sum_{n=0}^q A[\lambda-\Lambda_n,V]
-
\sum_{n=q+1}^N A(\Lambda_n-\lambda, -V).
$$
Since $N$ here is arbitrary, we obtain \eqref{a2}.

3. 
Let us prove \eqref{a5}.
Combining \eqref{a2}, \eqref{a3} with identities  \eqref{b20}, \eqref{b21}, 
one obtains 
$$
\limsup_{t\to\infty} t^{-p}N([\lambda_1,\lambda_2); H_t)
\leq
\frac{B}{2\pi}\sum_{n\geq 0} \meas\{x\in\R^2\mid \lambda_1-\Lambda_n\leq V(x)\leq  \lambda_2-\Lambda_n\}.
$$
Replacing $\lambda_2$ by $\lambda_2+\varepsilon$, letting $\varepsilon\to+0$ and using the continuity 
properties \eqref{d1}, one obtains \eqref{a5}. In the same way one obtains the lower bound \eqref{a6}.
\qed

\section*{Acknowledgements}
The author is grateful to  N.~A.~Azamov, A.~L.~Carey, E.~B.~Davies, R.~Hempel,  G.~Rozenblum and A.~Sobolev  for useful 
discussions and hints on the literature. 
The author is particularly grateful to N.~Filonov for a careful critical reading of the manuscript
and making a number of very useful suggestions.


\begin{thebibliography}{3}

\bibitem{ADH}
{\sc S.~Alama, P.~A.~Deift, R.~Hempel, }
\emph{Eigenvalue branches of the Schr\"odinger operator $H-\lambda W$ in a gap of $\sigma(H)$.} 
Comm. Math. Phys. \textbf{121} (1989), no. 2, 291--321.


\bibitem{AHS}
{\sc J.~Avron, I.~Herbst, B.~Simon,}
\emph{Schr\"odinger operators with magnetic fields. I. General interactions.}
Duke Math. J. \textbf{45} (1978), no. 4, 847--883.

\bibitem{ass} 
{\sc J.~Avron, R.~Seiler, B.~Simon},
\emph{The index of a pair of projections},
J. Funct. Anal. {\bf 120} (1994), no. 1, 220--237.


\bibitem{ACS}
{\sc N.~A.~Azamov, A.~L.~Carey, F.~A.~Sukochev},
\emph{The spectral shift function and spectral flow,}
Comm. Math. Phys. \textbf{276} (2007), no. 1, 51--91.

\bibitem{ACDS}
{\sc N.~A.~Azamov, A.~L.~Carey, P.~G.~Dodds, F.~A.~Sukochev},
\emph{Operator integrals, spectral shift and spectral flow,}
arXiv:math/0703442v1; to appear in Canadian Journal of Mathematics.


\bibitem{Birman2}
{\sc M.~Sh.~Birman}, 
\emph{The spectrum of singular boundary problems,}
Amer. Math. Soc. Transl. (2) \textbf{53} (1966),  23--80.


\bibitem{Birman}
{\sc M.~Sh.~Birman}, 
\emph{Discrete spectrum in the gaps of a continuous one for perturbations with large coupling constant.} 
Estimates and asymptotics for discrete spectra of integral and differential equations (Leningrad, 1989--90), 
57--73, Adv. Soviet Math., {\bf 7}, Amer. Math. Soc., Providence, RI, 1991.

\bibitem{BPR}
{\sc V.~Bruneau, A.~Pushnitski, G.~Raikov}, 
\emph{Spectral Shift Function in Strong Magnetic Fields},
St. Petersburg Math. J. \textbf{16} (2004),  no. 1, 207--238.

\bibitem{BS}
{\sc M.~Sh.~Birman, M.~Z.~Solomyak},
\emph{Spectral theory of self-adjoing operators in Hilbert space,}
D.~Reidel, Dordrecht, 1986. 


\bibitem{CareyPincus}
{\sc R.~W.~Carey, J.~D.~Pincus}, 
\emph{Mosaics, principal functions, and mean motion in von Neumann algebras,}
Acta Math. \textbf{138} (1977), no. 3-4, 153--218. 

\bibitem{Davies}
{\sc E.~B.~Davies, M.~Plum,}
\emph{Spectral pollution,}
IMA J. Numer. Anal. \textbf{24} (2004), no. 3, 417--438. 


\bibitem{Esteban}
{\sc J.~Dolbeault, M.~J.~Esteban, E.~S\'er\'e,}  
\emph{ On the eigenvalues of operators with gaps.
Application to Dirac operators.} 
J. Funct. Anal. {\bf 174} (2000), 208-226.


\bibitem{GM}
{\sc F.~Gesztesy, K.~Makarov},
\emph{ The $\Xi$ operator and its relation to KreinÕs spectral shift 
function}, J. Anal. Math. 81 (2000), 139--183. 

\bibitem{GMN}
{\sc F.~Gesztesy, K.~Makarov, S.~Naboko},
\emph{The spectral shift operator,} in Mathematical 
Results in Quantum Mechanics, J. Dittrich, P. Exner, and M. Tater (eds.), Operator Theory: 
Advances and Applications, Vol. 108, Birkh\"auser, Basel, 1999, p. 59Ð90. 

\bibitem{Siedentop}
{\sc M.~Griesemer, H.~Siedentop}, 
\emph{A minimax principle for the eigenvalues in spectral gaps,} 
J. London Math. Soc. (2) \textbf{60} (1999), no. 2, 490--500. 

\bibitem{Hempel2}
{\sc R.~Hempel},
\emph{Eigenvalues of Schr\"odinger operators in gaps of the essential spectrum --- an overview}, 
Contemporary Mathematics,  \textbf{458}, Amer. Math. Soc.  (2008)


\bibitem{Simon100DM}
{\sc D.~Hundertmark, B.~Simon},
\emph{Eigenvalue bounds in the gaps of Schršdinger operators and Jacobi matrices,}
J. Math. Anal. Appl. \textbf{340} (2008), 892--900.

\bibitem{IwaTam}
{\sc A.~Iwatsuka,  H.~Tamura},
\emph{Asymptotic distribution of negative eigenvalues
for two-dimensional Pauli operators with nonconstant magnetic fields}, 
Ann. Inst. Fourier (Grenoble)  \textbf{48}  (1998),  no. 2,
479--515.


\bibitem{KMS}
{\sc V.~Kostrykin, K.~A.~Makarov,  A.~Skripka}, 
\emph{The Birman-Schwinger principle in von Neumann algebras of finite type,}
J. Funct. Anal. \textbf{247} (2007), no. 2, 492--508. 

\bibitem{LS}
{\sc A.~Laptev, Yu.~Safarov}, 
\emph{Szeg\H{o} type limit theorems,}
 J. Funct. Anal. \textbf{138} (1996), no. 2, 544--559. 

\bibitem{RobbinSalamon}
{\sc J.~Robbin, D.~Salamon,} 
\emph{The spectral flow and the Maslov index,}
Bull. London Math. Soc. {\bf 27} (1995), no.~1, 1--33.

\bibitem{RS}
{\sc G.~Rozenblum, A.~Sobolev,}
\emph{Discrete spectrum distribution of the Landau operator perturbed by an expanding 
electric potential,}
to appear in M.~Sh.~Birman 80th anniversary issue of AMS Translations 

\bibitem{RS1}
{\sc M.~Reed, B.~Simon},
\emph{Methods of modern mathematical physics. I. Functional analysis.} 
Second edition. Academic Press,  New York, 1980.

\bibitem{RS2}
{\sc M.~Reed, B.~Simon},
\emph{Methods of modern mathematical physics. II. Fourier analysis, self-adjointness.} 
Academic Press, New York, 1975.


\bibitem{Safronov1}
{\sc  O.~L.~Safronov,} 
 \emph{The discrete spectrum in gaps of the continuous spectrum for indefinite-sign perturbations with a large coupling constant,} 
 (Russian) Algebra i Analiz {\bf 8} (1996), no. 2, 162--194; 
 translation in St. Petersburg Math. J. {\bf 8} (1997), no. 2, 307--331

\bibitem{Safronov2}
{\sc O.~L.~Safronov,}
\emph{The discrete spectrum of self-adjoint operators under perturbations of variable sign,} 
Commun. PDE {\bf 26} (2001), no. 3 \& 4, 629--649.

\bibitem{Safronov3}
{\sc O.~L.~Safronov,}
\emph{Spectral shift function in the large coupling constant limit}, 
J. Funct. Anal. \textbf{182} (2001), no. 1, 151--169.

\bibitem{Sobolev}
{\sc A.~V.~Sobolev,} 
\emph{Efficient bounds for the spectral shift function}, 
Ann. Inst. H. Poincar\'e Phys. Th\'eor. \textbf{58} (1993), no. 1, 55--83. 

\bibitem{Schwinger}
{\sc J.~Schwinger}, 
\emph{On the bound states of a given potential,} 
Proc. Natl. Acad. Sci. USA \textbf{47} (1961), 122--129. 

\end{thebibliography}
\end{document}